\documentclass[a4paper,11pt]{article}

\usepackage{amsmath}
\usepackage{amssymb}
\usepackage{amsthm}
\usepackage{amsfonts}
\usepackage{mathdots}
\usepackage{enumitem}
\usepackage{comment}
\usepackage{enumitem}
\usepackage{hyperref}

\title{Lubell mass and induced partially ordered sets}
\author{Ar\`es M\'eroueh\footnote{Department of Pure Mathematics and Mathematical Statistics, Centre for Mathematical Sciences, Wilberforce Road, Cambridge, CB3 0WB, UK. E-mail: \href{mailto:a.j.meroueh@dpmms.cam.ac.uk}{a.j.meroueh@dpmms.cam.ac.uk}.}  }

\newtheorem{theorem}{Theorem}[section]
\newtheorem{conjecture}[theorem]{Conjecture}
\newtheorem{lemma}[theorem]{Lemma}

\newtheorem{claim}[theorem]{Claim}
\newtheorem{observation}[theorem]{Observation}

\newcommand{\N}{\mathbb{N}}
\newcommand{\Z}{\mathbb{Z}}

\newcommand{\C}{\mathcal{C}}
\newcommand{\U}{\mathcal{U}}
\newcommand{\D}{\mathcal{D}}
\newcommand{\pr}{\mathrm{P}}
\newcommand{\e}{\mathrm{E}}
\newcommand{\p}{\mathcal{P}}
\newcommand{\f}{\mathcal{F}}
\newcommand{\s}{\mathcal{S}}
\newcommand{\T}{\mathcal{T}}

\begin{document}
\maketitle
\begin{abstract}
We prove that for every partially ordered set $P$, there exists $c(P)$ such that every family $\f$ of subsets of $[n]$ ordered by inclusion and which contains no induced copy of $P$ satisfies $\sum_{F\in \f}1/{n\choose |F|}\leq c(P)$. This confirms a conjecture of Lu and Milans \cite{lumilans}. 
 
\end{abstract}
\section{Introduction}
A \emph{partially ordered set}, or \emph{poset}, is a set $P$ equipped with some partial order relation $\leq$. A typical example of a partially ordered set is the hypercube on $n$ vertices, namely $\p[n]$, the set of subsets of $\{1,2,\ldots, n\}$, equipped with the order relation $\leq$ so that for any two $A,B\in \p[n]$, $A \leq B$ if and only if $A\subseteq B$. A \emph{chain} in a poset $P$ is a collection $x_1, x_2,\ldots, x_k$ of elements of $P$ such that $x_1< x_2< \ldots < x_k$. The \emph{height} of $P$ is the maximal size of a chain in $P$.

In this paper we consider a Tur\'an-type question for posets: given a fixed poset $P$, what is the maximal size of a subset $\f$ of the hypercube which does not contain $P$ as a subposet? Let us clarify what is meant by containment in this context. Given posets $P$ and $P'$, we say that $P$ is \emph{weakly contained} in $P'$ if there exists an injective map $\psi: P\longrightarrow P'$ such that for any $x,y\in P$, $\psi(x) \leq_{P'} \psi(y)$ if $x\leq_P y$. We say that $P'$ \emph{strongly contains} $P$ if in fact for any $x,y\in P$, $x\leq_P y$ if and only if $\psi(x)\leq_{P'} \psi(y)$. We also say in this case that $P'$ contains $P$ as an \emph{induced} poset. Given a poset $P$, $ex(n,P)$ is defined by 
$$ex(n,P) = \max \{|\f|: \f\subseteq \p[n] \text { and }\f \text{ does not weakly contain } P \},$$
and $ex^*(n,P)$ is defined by 
$$ex^*(n,P) = \max \{|\f|: \f\subseteq \p[n] \text { and }\f \text{ does not strongly contain } P \}. $$
There has been a considerable amount of research devoted to determining the asymptotic behaviour of $ex(n,P)$ and $ex^*(n,P)$. The first result of this kind is Sperner's Theorem \cite{sperner}, which states that the maximal size of an antichain of $\p[n]$ is ${n\choose \lfloor n/2\rfloor}$. In other words, if $P_k$ is defined to be the chain poset of length $k$, then Sperner's result says that $ex(n,P_2) = {n\choose \lfloor n/2 \rfloor}$. This was later extended by Erd\H{o}s \cite{erdos}, who proved that $ex(n,P_k)$ is the sum of the $k-1$ largest binomial coefficients of order $n$. Of course, in the case of chains, $ex(n,P_k)$ and $ex^*(n,P_k)$ are the same number. A systematic study of $ex(n,P)$ for various specific posets was undertaken by a number of authors. For example, in the case of the diamond poset $D_2$ (which is also the hypercube of dimension 2), the best bound to date is $ex(n,D_2) = O\left((2.25 + o(1)){n\choose \lfloor n/2\rfloor}\right)$ (due to Kramer, Martin and Young \cite{kramaryoung}), but it is conjectured that $ex(n,D_2)=O\left((2+o(1)){n\choose \lfloor n/2\rfloor}\right)$.  It is not known whether the limit $\lim_{n\to\infty} ex(n,P)/{n \choose \lfloor n/2 \rfloor}$ or the limit $\lim_{n\to\infty} ex^*(n,P)/{n \choose \lfloor n/2 \rfloor}$ exists for every $P$. If these numbers do exists, then we denote them by $\pi(P)$ and $\pi^*(P)$, respectively. A central conjecture in this field is the following.
\begin{conjecture}
\label{laconjecture}
For each poset $P$, $\pi(P)$ exists and is equal to $e(P)$, where $e(P)$ is the maximal number $m$ such that for all $n$, the $m$ middle layers of $\p[n]$ do not weakly contain $P$.   
\end{conjecture}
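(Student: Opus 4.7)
The lower bound $\pi(P) \geq e(P)$, once the limit is known to exist, is immediate: by definition of $e(P)$, the union of the middle $e(P)$ layers of $\p[n]$ is weakly $P$-free, and its size is $(e(P)+o(1))\binom{n}{\lfloor n/2\rfloor}$.

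For the upper bound I would pass through the Lubell mass $\lambda(\f) := \sum_{F\in\f} 1/\binom{n}{|F|}$. Since a weakly $P$-free family is in particular induced $P$-free, the main theorem announced in the abstract yields $\lambda(\f) \leq c(P)$ for every weakly $P$-free $\f$, and the LYM inequality $|\f|\leq \lambda(\f)\binom{n}{\lfloor n/2\rfloor}$ then gives $\pi(P) \leq c(P)$, with the existence of $\pi(P)$ itself following by a Kleitman-type averaging argument once such a linear upper bound is in hand. The content of Conjecture \ref{laconjecture} beyond this is to sharpen the constant from the (possibly enormous) $c(P)$ down to the sharp value $e(P)$.

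The natural attempt is to prove the LYM-style strengthening $\lambda(\f) \leq e(P)$ for every weakly $P$-free family. Using the probabilistic reading of $\lambda$ -- if $C$ is a uniformly random maximal chain in $\p[n]$ then $\mathbb{E}|\f\cap C| = \lambda(\f)$ -- this reduces to showing that on average a random maximal chain meets $\f$ in at most $e(P)$ sets. For the chain poset $P_k$ this is exactly Erd\H{o}s' theorem, and the hope is that a refined chain-partitioning or weighted averaging argument, tailored to the combinatorial structure of $P$, can push this through in the general case.

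The main obstacle is precisely this sharpening from $c(P)$ to $e(P)$. I would expect any proof to proceed via a stability/removal step of the form: if $\lambda(\f) \geq e(P)+\varepsilon$ then $\f$ must be structurally close to a union of $e(P)$ middle layers, at which point one exploits the near-layered structure together with the excess Lubell mass to exhibit a weakly embedded copy of $P$. A stability theorem of this kind that holds uniformly in $P$ is precisely what existing techniques (which succeed for chains, diamonds, trees, and a handful of other structured posets) have been unable to produce, and it is where I expect the real difficulty to lie; any full solution would presumably require a genuinely new structural insight into how arbitrary posets embed into the hypercube.
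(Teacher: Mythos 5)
The statement you were given is Conjecture \ref{laconjecture}, which the paper explicitly presents as an \emph{open conjecture} (attributed to Saks and Winkler) and does not prove; the paper's actual contribution is Theorem \ref{maintheorem}, the much weaker bound $l(\f)\leq c(P)$ for induced-$P$-free families. Your proposal, to its credit, correctly identifies that the real content of the conjecture is the sharpening from some constant $c(P)$ to the exact value $e(P)$, and you honestly flag that you do not have the stability argument needed to carry this out. So there is no dispute about where the difficulty lies --- but what you have written is a research plan, not a proof, and the central step is entirely missing.

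Two more concrete problems. First, your claim that the existence of the limit $\pi(P)=\lim_n ex(n,P)/\binom{n}{\lfloor n/2\rfloor}$ ``follows by a Kleitman-type averaging argument once such a linear upper bound is in hand'' is unsubstantiated and, as far as anyone knows, false: the paper states explicitly that it is not known whether this limit exists for every $P$, even though the linear upper bound $ex(n,P)\leq(|P|+1)\binom{n}{\lfloor n/2\rfloor}$ has been known since Erd\H{o}s. A bounded sequence need not converge, and no averaging or supersaturation argument is known that forces convergence here. Since existence of the limit is itself part of the conjecture, this is a genuine gap, not a technicality. Second, invoking the paper's main theorem for the upper bound in the weak-containment setting is unnecessary: a family with no weak copy of $P$ contains no chain of length $|P|$ (any chain on $|P|$ elements weakly contains $P$ via a linear extension), so $l(\f)\leq|P|-1$ already follows from Lubell's classical argument. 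This does not hurt your argument, but it signals that the induced-containment machinery of the paper is not the right tool for Conjecture \ref{laconjecture}; the conjecture concerns weak containment and a sharp constant, neither of which the paper's methods address.
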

This conjecture is attributed by Griggs, Li and Lu \cite{griggslilu} to Saks and Winkler, who made the (unpublished) observation that whenever $\pi(P)$ was known to exist it was also known to equal $e(P)$. The conjecture was proved to be true for posets whose Hasse diagram is a tree by Bukh \cite{bukh}.

In general, much less is known about induced containment than weak containment. Boehnlein and Jiang \cite{boehnleinjiang} extended Bukh's result \cite{bukh} to the induced case, i.e. they showed that the induced equivalent of Conjecture \ref{laconjecture} is also true for every tree poset. Carroll and Katona \cite{carrollkatona} proved that 
$${n\choose \lfloor n/2\rfloor}(1+1/n + \Omega(1/n^2))\leq ex^*(n,V_2) \leq {n\choose \lfloor n/2\rfloor} (1+2/n+O(1/n^2)), $$
where $V_2$ denotes the poset on three elements $a,b,c$ where $b$ and $c$ are incomparable and both larger than $a$. 

It follows at once from the result of Erd\H{o}s mentioned above that $ex(n,P)\leq (|P|+1){n \choose \lfloor n/2\rfloor}$. However it was unknown until recently  whether for every fixed poset $P$ there exists a constant $c(P)$ such that if $\f\subseteq \p[n]$ does not contain $P$ as an induced subposet, then $|\f| \leq c(P){n\choose \lfloor n/2 \rfloor}$. The existence of such a constant was conjectured to be true by Katona and by Lu and Milans \cite{lumilans}, and was proved by Methuku and P\'alv\"{o}lgyi \cite{methukupav} by means of a generalization of the Marcus-Tardos theorem about forbidden permutation matrices in 0-1 matrices \cite{marcustardos}. 
\begin{theorem}[Methuku, P\'alv\"{o}lgyi \cite{methukupav}] 
\label{indconst}
For every poset $P$ there exists $c(P)$ such that if $\f\subseteq \p[n]$ for some $n\in \N$ and $\f$ does not contain $P$ as an induced subposet, then $|\f|\leq c(P){n\choose \lfloor n/2\rfloor}$.
\end{theorem}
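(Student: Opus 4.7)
The strategy is to reduce the induced containment problem to a forbidden-pattern problem for 0-1 matrices (or higher-dimensional 0-1 arrays) and then invoke a suitable generalization of the Marcus--Tardos theorem, as signposted in the passage above.

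First I would fix a minimal ambient Boolean lattice into which $P$ embeds as an induced subposet: pick $m=m(P)\in\N$ and subsets $S_1,\dots,S_{|P|}\subseteq[m]$ with $S_i\subseteq S_j$ if and only if $i\leq_P j$. This exhibits $P$ concretely as a pattern of traces, against which we can search inside $\f$. Given $\f\subseteq\p[n]$ with $n$ much larger than $m$, I would partition $[n]$ into $m$ blocks $B_1,\ldots,B_m$ of size roughly $n/m$ and assign to each $F\in\f$ its profile $(F\cap B_1,\ldots,F\cap B_m)$. The heuristic is: if $\f$ is very large then, by pigeonhole, many sets in $\f$ share a "compatible" profile, and within the blocks one can further refine the traces so as to realise the $S_i$ simultaneously, producing an induced copy of $P$.

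To turn this into a clean extremal statement I would encode the collection of profiles as an $m$-dimensional 0-1 array, indexed in each coordinate by the possible traces on the corresponding block, with a 1 at $(F\cap B_1,\ldots,F\cap B_m)$ for each $F\in\f$. The absence of an induced copy of $P$ in $\f$ translates into the avoidance, inside this array, of a specific configuration determined by $P$ and the chosen embedding $S_1,\ldots,S_{|P|}$. A Marcus--Tardos-type bound would then yield a linear upper bound (in the relevant "side length") on the number of ones, which, after summing over the layers of $\p[n]$ and using that the middle binomial coefficient dominates, gives $|\f|\leq c(P){n\choose\lfloor n/2\rfloor}$.

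The main obstacle is establishing the required multidimensional generalization of Marcus--Tardos. The classical theorem controls 0-1 matrices avoiding a permutation submatrix, whereas the forbidden configuration here must encode both "must contain" and "must not contain" requirements, reflecting precisely the distinction between weak and induced containment. I expect the generalization to proceed by induction on the dimension $m$ (and, within dimension $m$, on $|P|$), imitating the Marcus--Tardos block-partition/double-counting argument: divide the array into subblocks, bound the number of "rich" subblocks via the induction hypothesis, and show that any array with too many ones must contain a rich-subblock configuration realising the forbidden pattern. The delicate point is arranging the induction so that the induced (non-inclusion) constraints propagate correctly through the refinement; this is where I expect the bulk of the technical work to lie.
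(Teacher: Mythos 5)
Your proposal follows the strategy of Methuku and P\'alv\"{o}lgyi's paper \cite{methukupav}, which is \emph{not} the route taken here: in this paper Theorem \ref{indconst} is stated as a known result and is recovered as an immediate corollary of the stronger Theorem \ref{maintheorem}, since ${n\choose |F|}\leq {n\choose \lfloor n/2\rfloor}$ for every $F$ gives $|\f|\leq l(\f){n\choose \lfloor n/2\rfloor}\leq c(P){n\choose \lfloor n/2\rfloor}$; and Theorem \ref{maintheorem} itself is proved by a completely different mechanism (universal posets, generalized pivots, and a concentration argument for the hypergeometric distribution), with no matrices anywhere. A genuine comparison of the two routes would be worthwhile, but only once your argument is complete, and as written it is not.

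The gap is twofold. First, the entire content of the theorem sits inside the ``Marcus--Tardos-type bound'' for multidimensional 0-1 arrays avoiding a configuration with both containment and non-containment requirements; you explicitly defer this (``I expect the generalization to proceed by induction\ldots''), so nothing is actually proved. This lemma is precisely the hard part of \cite{methukupav}, and it is not routine: the classical block-partition argument is built for submatrix (weak) containment, and making the induced (non-containment) constraints survive the refinement is exactly where the difficulty lies. Second, your specific encoding cannot support the conclusion you draw from it. If each coordinate of the array is indexed by the $2^{|B_i|}=2^{\Theta(n/m)}$ possible traces on the block $B_i$, then every $F\in\f$ occupies its own cell, so the number of ones equals $|\f|$; a bound linear in the side length (or even of order the side length to the power $m-1$) would force $|\f|\leq C\cdot 2^{n-\Omega(n/m)}$, which for $m\geq 2$ is asymptotically smaller than ${n\choose \lfloor n/2\rfloor}$ and is already violated by a middle layer of $\p[n]$ --- an antichain, which contains no induced copy of any $P$ of height at least two. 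So either the array must be indexed differently (in \cite{methukupav} the coordinates record the sizes $|F\cap B_i|$, giving polynomial side lengths, at the cost of many sets sharing a profile, which requires a further argument) or the form of the claimed bound must change. Until the encoding is fixed and the forbidden-pattern lemma is proved, this is a plausible research programme rather than a proof.
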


Given $\f\subseteq \p[n]$, the \emph{Lubell mass} of $\f$, denoted by $l(\f)$, is the quantity 
$$l(\f) = \sum_{F\in \f}1/{n\choose |F|}.$$
Notice that the Lubell mass of $\f$ is the expected number of times a maximal chain in $\p[n]$ chosen uniformly at random meets $\f$. Based on this observation, Lubell \cite{lubell} proved that if $\f\subseteq \p[n]$ is an antichain then $l(\f) \leq 1$, and this naturally implies Sperner's Theorem. More generally, Lubell's argument shows that if $\f$ contains no $P_k$ then $l(\f)\leq k-1$. The Lubell mass is a powerful tool for studying both $ex(n,P)$ and $ex^*(n,P)$, and has been used in various papers; for example Griggs, Li and Lu \cite{griggslilu} explicitly make use of it in order to bound $ex(n,D_2)$. Let us remark here that $l(\f)$ is technically a function of both $\f$ and the ground set $[n]$. Thus, to be precise, we should write $l_{[n]}(\f)$ instead of $l(\f)$. However, unless otherwise stated, the ground set is assumed to be $[n]$  and so we shall often simply write $l(\f)$. 

We see that, since $l(\f)\leq k-1$ for every family $\f$ not containing $P_k$, then $l(\f)\leq |P|-1$ for every family not containing a fixed poset $P$ weakly. The purpose of this paper is to prove a corresponding resut for induced containment.  
\begin{theorem}
\label{maintheorem}
For every poset $P$ there exists $c(P)$ such that if $\f\subseteq \p[n]$ for some $n\in \N$ and $\f$ does not contain $P$ as an induced subposet, then $l(\f)\leq c(P)$. 
\end{theorem}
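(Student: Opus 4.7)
The plan is to use Theorem \ref{indconst} as a black box and turn its \emph{size} bound into a \emph{Lubell mass} bound, combining a level-wise decomposition of $l(\f)$ with a random-restriction argument. Writing $\f_k=\{F\in\f:|F|=k\}$, we have $l(\f)=\sum_{k=0}^n|\f_k|/{n\choose k}$. My first step is to split the index set into a central range $|k-n/2|\le R$, with $R$ depending only on $P$, and a tail range. On the central range ${n\choose k}$ is within a constant factor of ${n\choose \lfloor n/2\rfloor}$, so Theorem \ref{indconst} immediately yields $\sum_{|k-n/2|\le R}|\f_k|/{n\choose k}\le O_R(c(P))$.

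For the tail range I would apply Theorem \ref{indconst} inside random Boolean sublattices. Fix $k<n/2$ (the case $k>n/2$ is symmetric under complementation) and choose a uniformly random $S\subseteq[n]$ of size $m=2k$. The family $\f\cap\p(S)$ is still $P$-induced-free, hence $|\f_k\cap\p(S)|\le c(P){2k\choose k}$. Taking expectations, the identity $\e_S[|\f_k\cap\p(S)|]=|\f_k|{n-k\choose k}/{n\choose 2k}$ produces $|\f_k|/{n\choose k}\le c(P)$ on each tail level. To combine these per-level bounds into a summable one I would exploit the averaging identity $\e_S[l_S(\f\cap\p(S))]=\sum_{F\in\f,\,|F|\le m}1/{n\choose |F|}$ and try to bound $l_S(\f\cap\p(S))$ uniformly for $|S|=m$ a constant depending only on $P$. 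A natural way to produce such a uniform bound is induction on $|P|$: peel off a maximal element $p^*$ of $P$ and split $\f$ according to whether a given set witnesses an induced copy of $P\setminus\{p^*\}$ playing the role of $p^*$, applying the inductive hypothesis to each of the two parts.

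The hardest step, and where I expect the main difficulty to lie, is precisely this bootstrap from per-level to summable. Direct random restriction to $\p(S)$ with $|S|=m<n$ is blind to sets of size greater than $m$, so a single averaging cannot recover the full Lubell mass, and the per-level bound $|\f_k|/{n\choose k}\le c(P)$ summed across $\Theta(n)$ levels is too weak by a factor of $n$. Overcoming this requires exploiting $P$-induced-freeness in a subtler way than through the bare size bound of Theorem \ref{indconst}: for instance, via a chain-Ramsey type lemma showing that a large Lubell mass (equivalently, a long random-chain intersection) forces enough incomparabilities in $\f$ to produce an induced copy of $P$, or via a carefully chosen partition of $\f$ into finitely many subfamilies that are each $P'$-induced-free for a strictly smaller $P'$, so that the total Lubell mass telescopes.
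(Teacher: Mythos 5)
There is a genuine gap, and it sits exactly where you locate it: the passage from per-level bounds to a bound on the sum. Worse, the machinery you set up before that point buys essentially nothing. Your random-restriction computation is correct and does yield $|\f_k|/{n\choose k}\leq c(P)$ for each $k$, but this is no stronger than the trivial bound $|\f_k|/{n\choose k}\leq 1$ that holds for \emph{every} family; likewise the central range $|k-n/2|\leq R$ trivially contributes at most $2R+1$. The entire content of Theorem \ref{maintheorem} is that the $\Theta(n)$ levels, each of which may a priori contribute close to $1$, together contribute only $O_P(1)$, and averaging the cardinality bound of Theorem \ref{indconst} over sublattices cannot see this: as you note, a restriction to $\p(S)$ with $|S|=m$ bounded is blind to all but the bottom $m$ levels, and there the identity $\e_S[l_S(\f\cap\p(S))]=\sum_{|F|\leq m}1/{n\choose |F|}$ combined with the only available uniform bound $l_S\leq m+1$ again returns the trivial estimate. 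Your two suggested repairs are not developed, and the second one (peel off a maximal element $p^*$ and split $\f$ according to whether a set completes an induced copy of $P\setminus\{p^*\}$) is precisely the induction that works for weak containment but breaks for induced containment: a set lying above an induced copy of $P\setminus\{p^*\}$ need not extend it to an induced copy of $P$, because the required \emph{incomparabilities} with the remaining elements are not inherited, and neither part of such a split is $P'$-induced-free for a smaller $P'$ in any evident way.

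Note also that the paper does not derive Theorem \ref{maintheorem} from Theorem \ref{indconst}; the implication runs the other way, and your outline is attempting the hard direction of that implication. The paper instead argues directly: it generalizes the Lu--Milans pivots to $r$-pivots, shows (Lemma \ref{pivotsbound}) that a family with no $(\gamma,r)$-flexible set and no large set has bounded Lubell mass, and shows (Lemma \ref{nodense}) that a family of large Lubell mass must contain a set almost all of whose $r$-subsets lie in any prescribed dense $\s\subseteq[n]^{(r)}$. Iterating these two facts up to $2m+1$ times along a nested sequence of intervals produces a set $X$ with $|X|\geq 2m$ together with witnesses $w_x$ for almost all $x\in X^{(\leq m)}$, forming an induced copy of a dense subposet of $U(|X|,m)$ or $D(|X|,m)$, which by Lemmas \ref{universality2} and \ref{universality} contains an induced copy of $P$. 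To salvage your outline you would need a substitute for this iterative mechanism; the averaging identities alone cannot supply it.
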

Theorem \ref{maintheorem}, which strengthens Theorem \ref{indconst}, was conjectured to be true by Lu and Milans \cite{lumilans}. They proved it in a number of special cases including posets of height at most 2, ``series-parallel'' posets, and hypercubes of dimension at most 3. We refer the reader to their paper \cite{lumilans} for more details. 

It is easily seen that Theorem \ref{maintheorem} implies Theorem \ref{indconst}. In fact, the Lubell mass of a family $\f$ can be viewed as a weighting of the elements of the hypercube, so that each layer of the hypercube is given the same total weight. Thus one of the main advantages of Theorem \ref{maintheorem} is that it gives information about the low (and high) layers of $\f$, whereas Theorem \ref{indconst} does not. 

The rest of this paper is organized as follows. In Section \ref{universal}, we reduce the problem of finding an induced poset $P$ to finding an element of certain families of ``universal'' posets. In Section \ref{pivots}, we generalize the notion of \emph{pivots} which were introduced by Lu and Milans in \cite{lumilans}. Then in Section \ref{simplified}, we explain the basic idea behind the proof of Theorem \ref{maintheorem}, and this serves to demonstrate the usefulness of generalizing the concept of pivots. Section \ref{technicalsection} is devoted  to the proof of a key technical fact, Lemma \ref{nodense}, which will subsequently allow us to prove Theorem \ref{maintheorem} in Section \ref{mainproof}. 
 
\section{Universal posets}
\label{universal}
Throughout this paper we shall use the following standard notation. Given $n,m \in \N$, $[n]$ denotes the set $\{1,2,\ldots, n\}$, and if $X$ is a set then $X^{(m)}$ denotes the set $\{Y\subseteq X: |Y| = m\}$; the elements of $X^{(m)}$ will be referred to as the $m$-subsets of $X$. Also, $X^{(\leq m)}$ denotes $\bigcup_{i=0}^m X^{(i)}$. Given $a,b\in \Z$ with $a\leq b$, $[a,b]$ denotes the set $\{a,a+1,\ldots, b\}$.

Let $n\in \N$. Let $Q_n^+$ be the partially ordered set obtained by ordering $\p[n]$ so that $x\leq y$ if and only if $x\subseteq y$. Thus $Q_n^+$ is simply the hypercube with the usual partial order. Let $Q_n^-$ be the partially ordered set obtained by ordering $\p[n]$ so that $x\leq y$ if and only if $x\supseteq y$. Obviously $Q_n^+$ and $Q_n^-$ are isomorphic, but it will be convenient in our proofs to have access to both of them. 

We begin with a simple lemma about partially ordered sets, observed in \cite{lumilans}.  

\begin{lemma}
\label{universality}
Let $P$ be a partially ordered set. Then $P$ is an induced subposet of $Q_{|P|}^+$.
\end{lemma}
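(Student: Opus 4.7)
The plan is to use the standard principal-ideal embedding. Enumerate the elements of $P$ as $p_1, p_2, \ldots, p_{|P|}$ and define the map $\psi : P \to \mathcal{P}[|P|]$ by
$$\psi(p_i) = \{j \in [|P|] : p_j \leq_P p_i\},$$
that is, $\psi(p_i)$ encodes the principal down-set of $p_i$ via the chosen indexing. I would then check the three properties required to make $\psi$ an induced embedding of $P$ into $Q_{|P|}^+$.

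First I would verify the forward implication: if $p_i \leq_P p_j$, then by transitivity any $p_k$ with $p_k \leq_P p_i$ also satisfies $p_k \leq_P p_j$, hence $\psi(p_i) \subseteq \psi(p_j)$. Next I would verify the converse, which is the step that forces us to use the principal down-set (rather than, say, just the up-set of strict predecessors): reflexivity gives $i \in \psi(p_i)$, so if $\psi(p_i) \subseteq \psi(p_j)$ then $i \in \psi(p_j)$, which by definition means $p_i \leq_P p_j$. Combining the two implications yields $p_i \leq_P p_j \iff \psi(p_i) \subseteq \psi(p_j)$, which is exactly what an induced embedding requires.

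Finally I would dispose of injectivity, which in fact follows from the induced property together with antisymmetry: if $\psi(p_i) = \psi(p_j)$ then $p_i \leq_P p_j$ and $p_j \leq_P p_i$, so $p_i = p_j$. There is no real obstacle here; the only thing worth flagging is the need to include $p_i$ itself in $\psi(p_i)$, because without reflexivity in the definition the converse implication could fail (e.g.\ two minimal elements would both be sent to $\emptyset$). I would write the lemma up in three or four lines along these lines.
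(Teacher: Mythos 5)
Your proposal is correct and uses exactly the same map as the paper: the principal down-set embedding $\psi(x)=\{z : z\le_P x\}$ encoded via an enumeration of $P$. The only difference is organizational (you verify the biconditional directly and deduce injectivity from antisymmetry, while the paper splits into the cases $x<y$ and $x,y$ incomparable), so the two arguments are essentially identical.
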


\begin{proof}

Identify the elements of $[|P|]$ with those of $P$, i.e. let $n_x:x\in P$ be an enumeration of $[|P|]$. Consider the map $\psi:P\longrightarrow [|P|]$ defined by $\psi(x) = \{n_z: z\in P,\,z\leq x \}$. Let us check that $\psi$ is an injective map which respects the order relation on $P$. To do so, it suffices to prove that if $x,y\in P$ and $x<y$ then $\psi(x)\subsetneq \psi(y)$, and if $x,y$ are incomparable then $\psi(x)$ and $\psi(y)$ are incomparable too. So let $x,y\in P$. If $x<y$ then $\psi(x) = \{n_z:z\in P,\, z\leq x \}\subseteq \{ n_z:z\in P,\, z\leq y\} = \psi(y)$. Besides, $n_y\not \in \psi(x)$ as $y\not\leq x$. Thus $\psi(x)\subsetneq \psi(y)$. If $x,y$ are incomparable then since $x\not\leq y$, $n_x\not\in \psi(y)$ and since $y\not\leq x$, $n_y\not\in \psi(x)$. Hence $\psi(x)$ and $\psi(y)$ are incomparable. 
\end{proof}

By Lemma \ref{universality}, in order to find an induced copy of a poset $P$ it is enough to find an induced copy of $Q_{|P|}^+$. However, an important idea of the proof of Theorem \ref{maintheorem}, rather than trying to find a copy of $Q_{|P|}^+ $ directly,  is to find a copy of a ``dense'' subposet which itself contains the desired copy of $Q_{|P|}^+$. We formalize this idea in what follows. 

Let $m,n\in \N$. We let $U(n,m)$ denote the partially ordered set obtained by ordering the elements of $[n]^{(\leq m)}$ so that $x\leq y$ if and only if $x\subseteq y$. Similarly $D(n,m)$ denotes the partially ordered set obtained by ordering the elements of $[n]^{(\leq m)}$ so that $x\leq y$ if and only if $x\supseteq y$. 

Let $\epsilon >0$. We let $\U(n,m,\epsilon)$ be the set of induced subposets $U$ of $U(n,m)$ such that $|U\cap [n]^{(i)}|\geq (1-\epsilon){n \choose i}$ for every $i$, $0\leq i\leq m$.  $\D(n,m,\epsilon)$ is the set of induced subposets $D$ of $D(n,m)$ such that $|D\cap [n]^{(i)}|\geq (1-\epsilon){n\choose i}$ for every $i$, $0\leq i \leq m$.  

\begin{lemma}
\label{universality2}
Let $m\in \N$. There exists $\epsilon >0$ such that for all $n\geq 2m$, every $U\in \U(n,m,\epsilon)$ contains an induced copy of $Q_m^+$, and every $D\in \D(n,m,\epsilon)$ contains an induced copy of $Q_m^-$. 
\end{lemma}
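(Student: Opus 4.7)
The plan is to prove the $U(n,m)$ statement by an averaging argument over ordered $m$-tuples of distinct elements of $[n]$; the $D(n,m)$ case is then identical (by reversing inclusion everywhere, or simply by noting that an element of $\D(n,m,\epsilon)$ is an element of $\U(n,m,\epsilon)$ with the order reversed, and that $Q_m^-$ is $Q_m^+$ with the order reversed).

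The key observation is that, given distinct $a_1,\dots,a_m\in[n]$, the $2^m$ sets $A_S=\{a_j:j\in S\}$ indexed by $S\subseteq[m]$ are all distinct and form an induced copy of $Q_m^+$ inside $[n]^{(\leq m)}$. So it suffices to show that if $\epsilon$ is small enough, then some ordered $m$-tuple $(a_1,\dots,a_m)$ of distinct elements of $[n]$ has the property that $A_S\in U$ for every $S\subseteq[m]$. I would do this by counting "bad" tuples: an ordered $m$-tuple is bad for $S$ if $A_S\notin U$.

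For each fixed $S\subseteq[m]$ with $|S|=i$, each missing $i$-set $B\in[n]^{(i)}\setminus U$ arises as $A_S$ from exactly $i!\cdot(n-i)(n-i-1)\cdots(n-m+1)$ ordered $m$-tuples of distinct elements (we permute $B$ into the slots indexed by $S$ and fill the remaining $m-i$ slots with distinct elements of $[n]\setminus B$). Since there are at most $\epsilon\binom{n}{i}$ missing $i$-sets and $\binom{m}{i}$ choices of $S$ of size $i$, the total number of bad tuples is at most
\[
\sum_{i=0}^{m}\binom{m}{i}\cdot\epsilon\binom{n}{i}\cdot i!\cdot(n-i)(n-i-1)\cdots(n-m+1)
\;=\;\epsilon\,2^{m}\cdot n(n-1)\cdots(n-m+1),
\]
which is strictly less than the total number $n(n-1)\cdots(n-m+1)$ of ordered $m$-tuples of distinct elements as soon as $\epsilon<2^{-m}$. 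Hence a good tuple exists, and it produces an induced $Q_m^+$ as explained.

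There is no serious obstacle here; the only thing to check carefully is the count above, and that the condition $n\geq 2m$ (which is more than enough to guarantee that $n\geq m$, so that $m$ distinct elements exist and the falling factorial is positive) suffices. Thus any $\epsilon<2^{-m}$ works; the argument for $\D(n,m,\epsilon)$ is word-for-word the same with $\subseteq$ replaced by $\supseteq$ throughout.
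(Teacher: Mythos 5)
Your proof is correct and rests on the same key reduction as the paper's: both arguments boil down to finding $m$ distinct elements of $[n]$ such that every subset of them of size at most $m$ lies in $U$ (equivalently, avoids $\overline{U}$), after which the $2^m$ generated sets form the induced $Q_m^+$. The only difference is in how that $m$-set is found: the paper samples a binomial random subset of expected size $2m$ and deletes one vertex from each element of $\overline{U}$ it contains, whereas you take a union bound over uniformly random ordered $m$-tuples; your count $\epsilon\,2^m\,n(n-1)\cdots(n-m+1)$ is right (distinctness of the tuple forces the slots outside $S$ to avoid $B$, giving exactly $i!\,(n-i)\cdots(n-m+1)$ tuples per missing $i$-set $B$), so any $\epsilon<2^{-m}$ works, which is in fact a better constant than the paper's $(2m)^{-(m+1)}$, though this is immaterial for the theorem. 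The symmetry argument for $\D(n,m,\epsilon)$ is exactly what the paper does as well.
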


\begin{proof}
Let $\epsilon = 1/(2m)^{m+1}$. Let $n\geq 2m$.  Let $p = 2m/n$. Let $U\in \U(n,m,\epsilon)$. Let  $\overline{U} = \{x\in [n]^{(\leq m)}:x\not\in U\}$. To prove the lemma, it is clearly sufficient to find a subset of $[n]$ of size $m$ not containing any element of $\overline{U}$. Choose a random subset $X$ of $[n]$, each element of $[n]$ being selected independently of the others with probability $p$. Let $Z$ be the number of elements of $[n]$ selected, so $\e(Z) = 2m$, and for $0\leq i\leq m$ let $Z_i$ be the number of elements of $\overline{U}\cap [n]^{(i)}$ contained in $X$. By assumption, $|\overline{U}\cap [n]^{(i)}|\leq \epsilon {n\choose i}$, so $\e(Z_i)\leq \epsilon p^i {n\choose i}\leq 1/(2m)$ by our choice of $\epsilon$ and $p$. Note that in fact $Z_0 = 0$ since $\emptyset \in U$ (because it is the only element of size 0 and $\epsilon >0$). Therefore $\e(Z - \sum_{0\leq i\leq m}Z_i)\geq m$ and so there exists a set $X$ for which $Z- \sum_{0\leq i\leq m}Z_i\geq m$. After removing from $X$ one vertex from every element of $\overline{U}$ contained in $X$ we are left with a set $X'$ of size at least $m$ and which does not contain any element of $\overline{U}$. The statement about $D(n,m,\epsilon)$ follows by symmetry.
\end{proof}

\section{Generalized pivots}
 \label{pivots}
Let $n\in \N$ and let $\f$ be a family of subsets of $[n]$. Let $r\in \N$. An \emph{$r$-pivot} of $A$ is an element $X$  of $A^{(r)}$ such that there exists an element $Y$ of $([n] \backslash A)^{(r)}$ and an element $B$ of $\f$ such that $B = (A \backslash X) \cup Y$. In other words, we can obtain a subset in $\f$ by deleting the $r$ elements of $X$ from $A$ and replacing them by $r$ new elements. We say that $B$ is a \emph{witness} of $X$ (being a pivot of $A$). If $r=0$ then we view $\emptyset$ as being a 0-pivot for any $A\in \f$, $A$ itself being the witness that $\emptyset$ is a pivot of $A$. An \emph{$r$-anti-pivot} is an element $Y$  of $([n] \backslash A)^{(r)}$ such that there exists an element $X$ of $A^{(r)}$ and an element $B$ of $\f$ such that $B = (A \backslash X) \cup Y$. The concept of $r$-pivots generalizes that of pivots introduced in \cite{lumilans} by Lu and Milans. A simple but quite important observation about $r$-pivots is as follows.

\begin{observation}
\label{obs}
Let $A\subseteq [n]$ and let $X\in A^{(r)}$ be an $r$-pivot of $A$ with witness $B$. Then for any $F\subseteq A$, $F\subseteq B$ if and only if $F\cap X = \emptyset$. Likewise, if $Y\in ([n] \backslash A)^{(r)}$ is an $r$-anti-pivot of $A$ with witness $B$, then for any $F\supseteq A$, $B\subseteq F$ if and only if $Y\subseteq F$.  
\end{observation}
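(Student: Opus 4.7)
The plan is to derive both equivalences directly from the defining identity $B = (A \setminus X) \cup Y$, using only the fact that by the definition of an $r$-pivot (resp.\ $r$-anti-pivot), $X \subseteq A$ and $Y \subseteq [n] \setminus A$; in particular $X$ and $Y$ are disjoint, and $Y$ is disjoint from every $F \subseteq A$.

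For the first statement I would fix $F \subseteq A$ and intersect with $B$. Since $F \cap Y \subseteq A \cap Y = \emptyset$, we have $F \cap B = F \cap (A \setminus X) = F \setminus X$. Therefore $F \subseteq B$ holds iff $F \setminus X = F$, which is iff $F \cap X = \emptyset$. For the second statement I would fix $F \supseteq A$ and observe that $A \setminus X \subseteq A \subseteq F$ automatically, so the containment $B = (A \setminus X) \cup Y \subseteq F$ reduces to the single condition $Y \subseteq F$.

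There is no real obstacle here: the observation is essentially a tautology once one unpacks the definitions of pivot and witness. The only thing worth emphasising is the conceptual content, namely that a pivot $X$ of $A$ with witness $B$ converts the containment question ``which $F \subseteq A$ also satisfy $F \subseteq B$?'' into a simple disjointness condition $F \cap X = \emptyset$, and symmetrically an anti-pivot $Y$ of $A$ with witness $B$ reduces the question ``which $F \supseteq A$ also satisfy $F \supseteq B$?'' to the containment $Y \subseteq F$. This is what makes pivots a useful bookkeeping device in the sequel.
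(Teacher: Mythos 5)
Your argument is correct and is exactly the intended verification: the paper states this as an unproved observation, and unpacking the witness identity $B=(A\setminus X)\cup Y$ together with the disjointness of $Y$ from $A$ (for the first part) and the automatic containment $A\setminus X\subseteq F$ (for the second part) is all that is needed. Both directions of each equivalence check out.
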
 

We generalize another related concept from \cite{lumilans}. Let $\gamma\in (0,1]$ be a real number. We say that a set $A\in \f$ is \emph{$(\gamma,r)$-flexible} (in $\f$) if it has at least $(1-\gamma){|A|\choose r}$ $r$-pivots, and is \emph{$(\gamma,r)$-anti-flexible} if it has at least $(1-\gamma){n-|A|\choose r}$ $r$-anti-pivots. 

An important ingredient of the proof of Theorem \ref{maintheorem} in the case of posets of height two in \cite{lumilans} was that a family $\f$ containing no large set as well as no $(\gamma,1)$-flexible sets has bounded Lubell mass. We prove that the same is true for generalized pivots. 

\begin{lemma}
\label{pivotsbound}
Let $r,n\in \N$, let $\gamma \in (0,1]$. Let $\f \subseteq \p[n]$ be a family not containing any $(\gamma, r)$-flexible set and not containing any set of size more than $n/2$. Then $l(\f)\leq f(\gamma,r)$, where $f(\gamma,r) = r + 2r^2\gamma^{-1}$. 
\end{lemma}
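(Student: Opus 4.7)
The plan is to bound $l(\f) = \sum_{k}|\f_k|/\binom{n}{k}$ layer-by-layer, where $\f_k = \f\cap [n]^{(k)}$. Layers with $k<r$ trivially contribute at most $1$ each and so add up to at most $r$; layers with $k>n/2$ are empty by hypothesis. The main task is therefore to bound $|\f_k|/\binom{n}{k}$ for $r \leq k \leq n/2$, and we may assume $r \geq 1$ (the case $r=0$ being vacuous).

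For such a layer I would double-count pairs $(A,X)$ with $A \in \f_k$ and $X \in A^{(r)}$ a non-pivot of $A$. Since $A$ is not $(\gamma,r)$-flexible, each $A$ contributes at least $\gamma\binom{k}{r}$ non-pivots, so there are at least $\gamma\binom{k}{r}|\f_k|$ such pairs, and each is sent to $B := A \setminus X \in [n]^{(k-r)}$. The key observation is that if two distinct pairs $(A,X)$ and $(A',X')$ share the same image $B$, then $X \cap X' \neq \emptyset$: indeed, if $X \cap X' = \emptyset$ then $X' \cap A = X' \cap (B \cup X) = \emptyset$, so $X' \in ([n]\setminus A)^{(r)}$, and since $B \cup X' = A' \in \f$, the set $X'$ would witness $X$ as an $r$-pivot of $A$, contradicting the non-pivot-ness of $X$.

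Hence for each $B \in [n]^{(k-r)}$ the set of $X$'s in pairs mapping to $B$ is an intersecting family of $r$-subsets of the $(n-k+r)$-set $[n]\setminus B$, and so has size at most $r\binom{n-k+r-1}{r-1}$ (cover it by the $r$ stars centred at the elements of any fixed member). Combining with the trivial bound $\binom{n}{k-r}$ on the number of possible $B$'s and rearranging via the identity $\binom{n}{k-r}/\binom{n}{k} = \binom{k}{r}/\binom{n-k+r}{r}$ yields
\[
\frac{|\f_k|}{\binom{n}{k}} \leq \frac{r^2}{\gamma(n-k+r)}.
\]

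Summing over $r \leq k \leq \lfloor n/2 \rfloor$ produces the partial harmonic tail $\sum_{m=n/2+r}^{n}1/m \leq \ln 2$, so the middle-layer contribution is below $r^2/\gamma$; adding the low-layer contribution of at most $r$ gives the desired $l(\f) \leq r + 2r^2/\gamma$. The only non-routine step is deriving the intersecting-family property from the definition of non-pivots; everything afterwards is mechanical counting.
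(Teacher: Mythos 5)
Your proof is correct and follows essentially the same double-counting argument as the paper: the paper phrases the same count as a bipartite graph between $\f\cap[n]^{(k)}$ and $[n]^{(k-r)}$ and bounds the degree of each $C\in[n]^{(k-r)}$ by fixing one neighbour $A$ and observing that every other neighbour must meet $A\setminus C$, which is exactly your intersecting-family observation. Your bookkeeping is marginally tighter (giving $r^2/(\gamma(n-k+r))$ per layer rather than the paper's $2r^2/(\gamma(n-k+r))$), but both comfortably yield $f(\gamma,r)=r+2r^2\gamma^{-1}$.
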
 
\begin{proof}
Let $r\leq k\leq n/2$. Construct a bipartite graph with vertex classes $\f \cap[n]^{(k)}$ and $[n]^{(k-r)}$ by drawing an edge between $A\in \f\cap [n]^{(k)}$ and $C\in [n]^{(k-r)}$ if $C\subseteq A$ and $A\backslash C$ is not an $r$-pivot of $A$. Let $G$ be the subgraph obtained by deleting the vertices of degree zero in $[n]^{(k-r)}$, and let $U$ and $V$ be the two vertex classes of $G$, so that $U = V(G)\cap [n]^{(k)}$ and $V = V(G) \cap [n]^{(k-r)}$. Let us count $e(U,V)$, the number of edges of this bipartite graph, in two different ways. First, for every $A\in U$, since $A$ is not $(\gamma, r)$-flexible, $A$ must be joined to at least $\gamma{k\choose r}$ elements of $V$. Thus, $e(U,V) \geq \gamma{k\choose r}|U|$. Now let $C\in V$, so there exists $A\in U$ such that $CA$ is an edge of $G$. Then any element of $U$ other than $A$ containing $C$ must intersect $A\backslash C$ (else $A\backslash C$ would be a pivot of $A$). Now for each $x\in A\backslash C$, there are at most ${n-k+r-1\choose r-1}$ such sets containing $x$, so $C$ has degree at most $1 + r{n-k+r-1\choose r-1}\leq 2r{n-k+r-1\choose r-1} = 2r^2{n-k+r\choose r}/(n-k+r)$ in $G$. Hence $e(U,V)\leq 2r^2{n \choose k-r}{n-k+r \choose r}/(n-k+r) = 2r^2{n\choose k}{k\choose r}/(n-k+r)$. Thus by combining the two bounds on $e(U,V)$, we have $|U|/{n\choose k}\leq 2r^2/(\gamma(n-k+r))$. Therefore 
\begin{align*}
l(\f) &= \sum_{k=0}^{n}\frac{|\f\cap [n]^{(k)}|}{{n\choose k}}\\
&\leq  r + \sum_{k=r}^{n/2}\frac{|\f\cap [n]^{(k)}|}{{n\choose k}}\\
&\leq  r + 2r^2\gamma^{-1}\sum_{k=r}^{n/2}\frac{1}{n-k}\\
&\leq  r + 2r^2\gamma^{-1}.\qedhere
\end{align*}
\end{proof}

\section{A first attempt}
\label{simplified}
In order to motivate the introduction of $r$-pivots, as well as to explain the basic idea behind the proof of Theorem \ref{maintheorem}, we shall give in this section a brief sketch of a proof of Theorem \ref{maintheorem} under two simplifying (but false!) assumptions. To this end, we first introduce some notation as well as a basic and important lemma found in Lu and Milans \cite{lumilans}. We include a proof of it in an effort to make this paper self-contained. 

Let $n\in \N$ and let $\f\subseteq \p[n]$. Let $A,B\subseteq [n]$ with $B\subseteq A$. We let $l_{B,A}(\f)$ denote the expected number of times a random full chain in the interval $[B,A]$ meets $\f$ (the interval $[B,A]$  of $\p[n]$ is the set $\{X\in \p[n]: B\subseteq X\subseteq A \}$). Also, we let $\f_{B,A}$ denote the family $\{F\backslash B:F\in \f, B\subseteq F \subseteq A\}$. Notice that $l_{B,A}(\f) = l_{\emptyset,A\backslash B}(\f_{B,A})$. In other words, $l_{B,A}(\f)$ is the Lubell mass of $\f_{B,A}$ viewed as a family of subsets of $A\backslash B$. 

\begin{lemma}[Lu, Milans \cite{lumilans}]
\label{centred}
There exist $A, B \in \f$ such that $l_{\emptyset,A}(\f)\geq l(\f)$ and $l_{B,[n]}(\f)\geq l(\f)$. 
\end{lemma}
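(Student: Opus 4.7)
The plan is to prove the two existence statements separately, by a probabilistic averaging over uniformly random maximal chains. For the existence of $A$, I would pick a uniformly random maximal chain $C = (C_0 \subsetneq C_1 \subsetneq \ldots \subsetneq C_n)$ of $\p[n]$. The standard double-counting (computing $\pr(F \in C) = 1/\binom{n}{|F|}$ for each $F\in \f$) gives $l(\f) = \e[|C\cap \f|]$. Whenever $C\cap \f\neq \emptyset$, let $A(C)$ denote the top (largest, by inclusion) element of $C\cap \f$; then $C\cap \f \subseteq [\emptyset, A(C)]$, so $|C\cap \f| = |C\cap \f \cap [\emptyset, A(C)]|$.

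The main step is the identity
$$l(\f) \;=\; \sum_{A_0\in \f} \pr(A(C) = A_0)\cdot l_{\emptyset, A_0}(\f).$$
To prove it, condition on the event $\{A_0 \in C\}$: the chain $C$ then splits into two conditionally independent uniformly random maximal chains $C^-$ on $[\emptyset, A_0]$ and $C^+$ on $[A_0, [n]]$. The event $\{A(C) = A_0\}$ is exactly $\{A_0\in C\}$ together with the condition that $C^+$ meets $\f$ only at $A_0$, and the latter depends only on $C^+$. Hence, conditional on $A(C) = A_0$, the lower chain $C^-$ is still uniformly distributed among maximal chains of $[\emptyset, A_0]$, so $\e[|C\cap \f|\mid A(C) = A_0] = \e[|C^- \cap \f|\mid A(C) = A_0] = l_{\emptyset, A_0}(\f)$, and summing over $A_0$ yields the displayed identity.

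Since $\sum_{A_0\in \f}\pr(A(C) = A_0) = \pr(C\cap \f\neq \emptyset)\leq 1$, the identity forces
$$l(\f) \;\leq\; \max_{A_0\in \f} l_{\emptyset, A_0}(\f),$$
giving the desired $A\in \f$. The statement for $B$ follows by the symmetric argument where one takes $B(C)$ to be the bottom-most element of $C\cap \f$ and uses that, conditional on $B(C) = B_0$, the upper portion of $C$ is a uniformly random maximal chain of $[B_0, [n]]$. The one point requiring care in writing this up is the conditional-independence claim, namely that conditioning on the specific event ``$A_0$ is the top of $C\cap \f$'' (rather than just ``$A_0\in C$'') preserves uniformity of the lower half of $C$; once this routine verification is made, the rest is averaging.
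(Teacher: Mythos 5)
Your proposal is correct and takes essentially the same approach as the paper: both condition on the identity of the largest element of $\f$ met by a uniformly random maximal chain, observe that the conditional expectation of the number of hits given this event equals $l_{\emptyset,A}(\f)$, and conclude by averaging since the conditioning probabilities sum to at most $1$. The only difference is that you spell out the conditional-independence justification that the paper passes over with ``clearly''.
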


\begin{proof}
Choose a maximal chain $\mathcal{C}$ in $\p [n]$ uniformly at random. Let $Z$ be the number of times $\C$ meets $\f$, so that $\e(Z) = l(\f)$. Given $A\in \f$, let $E_A$ be the event that $A$ is the largest element of $\f$ that $\C$ contains. Also let $E_*$ be the event that $\C$ meets no elements of $\f$. We then have $\e(Z) = \pr(E^*)\e(Z|E^*)+\sum_{A\in \f}\pr(E_A)\e(Z|E_A)$. Now obviously $\e(Z|E^*) = 0$, and so $\e(Z) =\sum_{A\in \f}\pr(E_A)\e(Z|E_A)$.  But, clearly, $\e(Z|E_A) =l_{\emptyset, A}(\f)$ for every $A\in \f$. Hence there must exist $A$ such that $l_{\emptyset, A}(\f)\geq l(\f)$. The second part of the statement follows by symmetry.  
\end{proof}

Now that we have stated Lemma \ref{centred}, we are ready to give our incorrect proof of Theorem \ref{maintheorem}. Let $n\in \N$ and let $\f\subseteq \p[n]$. Let $\f_+ = \{ F\in \f:|F|\geq n/2$ and let $\f_- = \{ F\in \f:|F|\leq n/2\}$. Our first assumption shall be that whatever $\f$ is, it is always true that $l(\f_-)\geq l(\f)/2$. Our second assumption shall be that Lemma \ref{pivotsbound} is also true if $\gamma = 0$, i.e. there exists some constant $b_r$ depending on $r$ only such that $l(\f) \leq b_r$ for every family $\f$ containing no set $A$ for which each element of $A^{(r)}$ is a pivot, as well as no set of size more than $n/2$. 

Suppose then that $P$ is some arbitrary poset. Let $m = |P|$. Suppose that $l(\f)
\geq 2b_1 + 4b_2 + \cdots + 2^{m}b_m + 2^{m+1}(m+1)$. We begin by finding sequences $A_0\supseteq A_1\supseteq A_2\supseteq \cdots \supseteq A_m$ and $\f_0, \f_1,\f_2,\ldots, \f_m$ 
such that for each $i\in [0,m]$, $\f_i\subseteq \f$, $F\subseteq A_i$ for 
each $F\in \f_i$, and finally $A_i$ is a $(0,i)$-flexible set in $\f_{i-1}$.  
Moreover, we make sure that $l_{\emptyset,A_m}(\f_m)\geq m+1$. To do so, note by Lemma \ref{centred} that there 
exists $A_0\in \f$ such that $l_{\emptyset,A_0}(\f)\geq l(\f)$. Let $\f_0 = \f_
{\emptyset,A_0}$, so that $l(\f_0) \geq l(\f)$ when viewed as a family of subsets 
of $A_0$. By our first assumption, $l_{\emptyset,A_0}((\f_0)_-)\geq l(\f_0)/2 = 
b_1 + 2b_2 + \cdots + 2^{m-1}b_m+ 2^{m}(m+1)$. Now let $(\f_0)_-^*$ be the set of 
elements of $\f$ which are not $(0,1)$-flexible in $(\f_0)_-$. Notice that a pivot 
of $A$ in $(\f_0)_-^*$ is certainly also a pivot of $A$ in $(\f_0)_-$. Therefore 
the elements of $(\f_0)_-^*$ are not $(0,1)$-flexible in $(\f_0)_-^*$ either. 
Hence by our second assumption, $l_{\emptyset,A_0}((\f_0)_-^*)\leq b_1$. Therefore 
by considering  $(\f_0)_- \backslash (\f_0)_-^* $ we see that by Lemma \ref
{centred}, there exists $A_1\in (\f_0)_-$ which is $(0,1)$-flexible in $(\f_0)_-$ 
and moreover $l_{\emptyset, A_1}(\f_0)\geq l_{\emptyset, A_1}((\f_0)_-) - b_1 \geq l(\f_0)/2 - b_1 \geq b_2 + 2b_3 
+ \cdots + 2^{m-2}b_m+ 2^{m-1}(m+1)$. We let $\f_1 = (\f_0)_{\emptyset,A_1}$, and iterate 
this procedure, now finding a set $A_2\in \f_1$ which is $(0,2)$-flexible and such 
that $l_{\emptyset,A_2}(\f_1)\geq b_3 + 2b_4 + \cdots + 2^{m-3}
b_m+ 2^{m-2}(m+1)$, and so on. Iterating this procedure $m+1$ times yields the desired sequences. 
 
 Let $X = A_m$. Since $l_{\emptyset,A_m}(\f_m)\geq m+1$, $X$ has size at least  $m$. For each $i$, $0\leq i \leq m$, and each $x\in 
X^{(i)}$, since $x$ is an $i$-pivot of $A_i$ in $\f_{i-1}$, there exists a witness 
$w_x\in \f_{i-1}$ for the fact that $x$ is an $i$-pivot of $A_i$. Let $W = \bigcup_
{x\in X^{(\leq m)}}\{w_x\}$. Notice that $W$ is a subset of $\f$. Let us order $X^
{(\leq m)}$ by reverse inclusion, so that that $X$, as a partially ordered set, is 
isomorphic to $D(|X|,m)$. Consider the injective map $\psi: X^{(\leq m)}
\longrightarrow W$ which sends $x\in X^{(\leq m)}$ to its witness $w_x$. We claim 
that $\psi$ preserves the order relation on $X^{(\leq m)}$. Indeed, suppose that 
$x,y\in X^{(\leq m)}$ where $w_x\in \f_{i-1}$ for some $i$ and $w_y \in \f_{j-1}$ 
for some $j$ and without loss of generality $i\leq j$. If $i=j$ and $x\neq y$, 
then $x$ and $y$ are incomparable in $X^{(\leq m)}$.  It is also easy to see that 
$w_x$ and $w_y$ are two distinct elements of $\f$ of the same size and hence are 
incomparable in $\f$ too. So let us assume that $i<j$ (and so $x\neq y$). Clearly since 
$|x| < |y|$, either $x>y$ (if $x\subsetneq y$) or $x$ and $y$ are incomparable. If 
$x\subseteq y$, then by Observation \ref{obs}, $w_y \subsetneq w_x$ since $w_y
\subseteq A_i$. If $x$ and $y$ are incomparable in $X^{(\leq m)}$ then there 
exists $z\in x\backslash y$, but then since $x\subseteq X\subseteq A_j$, it must 
be the case that $z\in w_y$, and so since $z\not\in w_x$, $w_x$ and $w_y$ must be 
incomparable. This proves that $\psi$ preserves the order relation, and hence that 
$\f$ contains an induced copy of $D(|X|,m)$, hence one of $Q^-_m$ (as $|X|\geq m)
$, and so one of $P$ too. 

Of the two assumptions that we made, the first one is the one which is most easily dealt with. Indeed, it is clear that for any $\f\subseteq \p[n]$, either $l(\f_-)\geq l(\f)/2$ or $l(\f_+)\geq l(\f)/2$. Therefore, in the proof of Theorem \ref{maintheorem}, we shall resort to a ``two passes'' argument; there will be $2m+1$ steps (rather than $m+1$) and for at least $m+1$ steps we shall know that the same alternative held (say $l(\f_-)\geq l(\f)/2$), which, as it turns out, shall be enough to find a copy of $P$. 

The second assumption is more difficult to overcome. Indeed, it is not possible to force all $r$-subsets of some element of $\f$ to be $r$-pivots. However, it is possible, by Lemma \ref{pivotsbound}, to force a \emph{very large} proportion of them to be $r$-pivots. How can one then make use of this? The key idea is that, having ensured that a very large proportion of the $r$-subsets of a set $A$ are $r$-pivots, we can find a large subfamily of $\f_{\emptyset,A}$ such that each member $F$ of this family is such that a very large proportion of its $r$-subsets are pivots of $A$. This will allow us to carry out the iterative procedure described above. In the next section, we shall make this idea more precise.   

\section{Families containing no fat sets}
\label{technicalsection} 

Let us begin with an important definition. Let $r,n\in \N$. Let $\s\subseteq [n]^{(r)}$. Let $X\subseteq [n]$.  Let $\epsilon >0$. We say that $X$ is \emph{$(\epsilon,\s)$-fat} if $|X^{(r)}\cap\s|\geq (1-\epsilon) {|X| \choose r}$. 

Our goal in this section is to prove Lemma \ref{nodense}, which says that a family $\f$ containing no $(\epsilon,\s)$-fat set has bounded Lubell mass provided $\s$ is large enough. Essentially all the work required to prove Lemma \ref{nodense} is contained in Lemma \ref{technical}, where we use the language of probability to bound the proportion of elements of a fixed layer of the hypercube which contain too many elements of a small subset $\T$ of $[n]^{(r)}$.

Let $m,k,n$ be positive integers with $\max(m,k) \leq n$. The \emph{hypergeometric distribution} with parameters $m,k,n$ is the probability distribution of the random variable $Z = |X\cap [k]|$ where $X$ is  chosen uniformly at random in $[n]^{(m)}$. In other words, $Z$ counts the number of elements  which lie in $[k]$ of an $m$-subset of $[n]$ chosen uniformly at random. By Theorem 2.10 of \cite{janluru}, the following standard concentration inequality holds. 

\begin{lemma}
\label{hypgeom}
Let $m,k,n$ be positive integers with $\max(m,k) \leq n$, and let $Z$ be a hypergeometric random variable with parameters $m,k,n$. Let $t\geq 0$. Then
$$\pr\left(Z\geq \e(Z) + t\right) \leq \exp(-2t^2/m) .$$
\end{lemma}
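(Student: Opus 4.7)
The plan is to reduce the hypergeometric random variable to a sum of independent Bernoulli random variables by a classical coupling argument of Hoeffding, and then to apply the standard Chernoff bound.

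First I would write $Z = \sum_{i=1}^m X_i$, where $X_i$ is the indicator of the event that the $i$-th element of an $m$-subset of $[n]$ drawn uniformly at random (in some arbitrary fixed order) lies in $[k]$. Although the $X_i$ are not independent, Hoeffding's classical inequality for sampling without replacement asserts that for every convex function $\phi$, $\e\, \phi(Z) \leq \e\, \phi(Z')$, where $Z' = \sum_{i=1}^m Y_i$ and the $Y_i$ are i.i.d.\ Bernoulli variables with parameter $p = k/n$. One way to obtain this reduction is to show that the joint law of $(X_1, \ldots, X_m)$ can be written as a mixture of laws each of which is itself a convex combination (via averaging over orderings of a fixed multiset) of the law of $(Y_1, \ldots, Y_m)$ with the same mean, after which Jensen's inequality applied to $\phi$ yields the stated dominance.

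Applying this to $\phi(z) = e^{\lambda z}$ with $\lambda > 0$, and using independence of the $Y_i$, we obtain $\e\, e^{\lambda Z} \leq \e\, e^{\lambda Z'} = \prod_{i=1}^m \e\, e^{\lambda Y_i}$. Since each $Y_i \in [0,1]$ with $\e Y_i = p$, Hoeffding's lemma gives $\e\, e^{\lambda(Y_i - p)} \leq e^{\lambda^2/8}$, and hence $\e\, e^{\lambda(Z' - mp)} \leq e^{m\lambda^2/8}$. Because $\e Z = mp = \e Z'$, Markov's inequality now yields
$$\pr(Z \geq \e Z + t) \leq e^{-\lambda t}\, \e\, e^{\lambda(Z - \e Z)} \leq e^{-\lambda t + m\lambda^2/8},$$
and optimizing by choosing $\lambda = 4t/m$ gives exactly the stated bound $\exp(-2t^2/m)$.

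The main obstacle is the first step, Hoeffding's reduction from sampling without replacement to sampling with replacement; once this convex-order domination is in hand, the remaining exponential-moment Chernoff calculation for bounded independent random variables is entirely routine.
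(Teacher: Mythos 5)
Your proof is correct. The paper does not actually prove this lemma --- it simply cites Theorem 2.10 of Janson, \L uczak and Ruci\'nski --- and what you have written is precisely the standard argument behind that citation: Hoeffding's convex-order domination of sampling without replacement by sampling with replacement, followed by the usual exponential-moment bound for bounded independent variables and optimization at $\lambda = 4t/m$. All the constants check out, so your write-up is a correct self-contained substitute for the reference.
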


Lemma \ref{hypgeom} enables us to prove the intuitively true fact if $\T\subseteq [n]^{(r)}$ is small then with very high probability a uniformly random $m$-subset of $[n]$ contains few elements of $\T$.  

\begin{lemma}
\label{technical}
Let $\epsilon\in (0,1]$. Let $r\in \N$. There exists $c(\epsilon,r)>0$, $\eta(\epsilon,r)>0$ and $m_0(\epsilon,r)$ such that, for each $n\geq m\geq m_0(\epsilon,r)$, if $\T\subseteq [n]^{(r)}$ satisfies $|\T|\leq \eta(\epsilon,r){n\choose r}$, then 
$$\pr\left(|X^{(r)}\cap \T|> \epsilon{m\choose r}\right)\leq \exp(-c(\epsilon,r)m), $$
where $X$ is chosen uniformly at random in $[n]^{(m)}$. 
\end{lemma}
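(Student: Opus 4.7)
The plan is to prove Lemma \ref{technical} by induction on $r$. The base case $r = 1$ follows immediately from Lemma \ref{hypgeom}: setting $\eta(\epsilon, 1) = \epsilon/2$, for any $\T \subseteq [n]$ with $|\T| \leq \eta n$ the random variable $|X \cap \T|$ is hypergeometric with mean $m|\T|/n \leq (\epsilon/2) m$, and Lemma \ref{hypgeom} with $t = (\epsilon/2) m$ gives $\pr(|X \cap \T| > \epsilon m) \leq \exp(-\epsilon^2 m / 2)$.

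For the inductive step I would use the double-counting identity
\[
r \cdot |X^{(r)} \cap \T| \;=\; \sum_{v \in X} D_v, \qquad D_v := |(X \setminus \{v\})^{(r-1)} \cap \T_v|,
\]
where $\T_v := \{S \setminus \{v\} : S \in \T,\; v \in S\} \subseteq ([n] \setminus \{v\})^{(r-1)}$. Since $\sum_{v \in [n]} |\T_v| = r|\T| \leq \eta n {n-1 \choose r-1}$, averaging shows that the set $H := \{v \in [n] : |\T_v| > \eta^{1/2} {n-1 \choose r-1}\}$ of \emph{heavy} vertices satisfies $|H| \leq \eta^{1/2} n$. The strategy is to control the contribution to $\sum_{v \in X} D_v$ from heavy and light ($v \notin H$) vertices of $X$ separately.

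For the heavy part I would apply the $r=1$ case to $H$ (which has density $\leq \eta^{1/2}$ in $[n]$), obtaining $|X \cap H| \leq (\epsilon/2) m$ with failure probability $\exp(-\Omega(m))$; combined with the trivial bound $D_v \leq {m-1 \choose r-1}$, this yields $\sum_{v \in X \cap H} D_v \leq (\epsilon/2) m {m-1 \choose r-1}$. For each light $v$, the family $\T_v$ has density at most $\eta^{1/2}$, and conditionally on $v \in X$ the set $X \setminus \{v\}$ is uniform in $([n] \setminus \{v\})^{(m-1)}$, so the induction hypothesis at parameter $\epsilon/2$ gives $\pr(D_v > (\epsilon/2) {m-1 \choose r-1} \mid v \in X) \leq \exp(-c(\epsilon/2, r-1)(m-1))$. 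Union-bounding over $v$ and weighting by $\pr(v \in X) = m/n$, the event ``$D_v \leq (\epsilon/2){m-1 \choose r-1}$ for every light $v \in X$'' fails with probability at most $m \exp(-c(\epsilon/2, r-1)(m-1))$. On the intersection of both good events we obtain $\sum_{v \in X} D_v \leq \epsilon m {m-1 \choose r-1} = r\epsilon {m \choose r}$, as required. The choice $\eta(\epsilon, r) = \min(\epsilon^2/16,\; \eta(\epsilon/2, r-1)^2)$ makes both applications valid.

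The main conceptual obstacle is picking the right heavy/light threshold. Using $\eta^{1/2} {n-1 \choose r-1}$---the geometric mean of the trivial bound ${n-1 \choose r-1}$ and the average bound $\eta {n-1 \choose r-1}$---is what allows us to simultaneously keep $|H| \leq \eta^{1/2} n$ small enough for the $r=1$ concentration on $H$ to spend only budget $\epsilon/2$, and keep each light $\T_v$ sparse enough that the induction hypothesis applies with budget $\epsilon/2$. Any other natural threshold would force one of these two estimates to exceed its budget.
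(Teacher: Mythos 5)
Your proposal is correct and follows essentially the same route as the paper: induction on $r$, with the base case from the hypergeometric tail bound and the inductive step via the double-counting identity together with a split of $[n]$ into heavy and light vertices, handling the heavy set by the $r=1$ case and the light links by the induction hypothesis at parameter $\epsilon/2$. The only difference is cosmetic: you take the geometric-mean threshold $\eta^{1/2}\binom{n-1}{r-1}$ with $\eta(\epsilon,r)=\min(\epsilon^2/16,\eta(\epsilon/2,r-1)^2)$, whereas the paper thresholds at $\eta(\epsilon/2,r-1)\binom{n-1}{r-1}$ and sets $\eta(\epsilon,r)=\eta(\epsilon/2,r-1)\,\eta(\epsilon/2,1)$; both choices make the two budget constraints hold simultaneously.
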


\begin{proof}
It will be convenient here to view $\T$ as an $r$-uniform hypergraph with vertex set $[n]$. In the light of this, let us remind the reader of two standard definitions. Let $v\in [n]$. The \emph{link graph} of $v$ in $X$ is the set $L_X(v) =\{ e\backslash\{v\}:e \in \T,\, v\in e \text{ and }e\subseteq X \}$. The \emph{degree} of $v$ in $X$, denoted by $d_X(v)$, is defined to be $|L_X(v)|$. When $X = [n]$ we simply write $L(v)$ and $d(v)$. 

For $r=0$, we can set $\eta(\epsilon,0) = 1/2$, $c(\epsilon,0)=1$ and $m_0(\epsilon,0) = 0$ for any $\epsilon$ (in fact, any $\eta(\epsilon,0)<1$ and any $c(\epsilon,0)>0$ would do). So let us focus on the case $r\geq 1$. We shall prove the lemma by induction on $r$. First consider the base case $r=1$. Set $\eta(\epsilon,1) = \epsilon/2$ and $m_0(\epsilon,1)=1$ and suppose $\T\subseteq [n]$ has size at most $\eta(\epsilon,1)n$. Let $m\geq m_0(\epsilon,1)$, and choose $X\in [n]^{(m)}$ uniformly at random. Then $|X\cap \T|$ is hypergeometrically distributed with parameters $m$, $k=|\T|$, $n$, and $\e(|X\cap \T|) = km/n \leq \epsilon m/2$. Therefore, by Lemma \ref{hypgeom}, 
\begin{align*}
\pr(|X\cap \T|>\epsilon m) &=  \pr(|X\cap \T|>\epsilon m/2 + \epsilon m/2) \\
&\leq \pr(|X\cap \T|\geq \e(|X\cap \T|) + \epsilon m/2) \\
& \leq \exp(-\epsilon^2 m/2).
\end{align*}
Thus the base case holds if we set $c(\epsilon,1) = \epsilon^2/2$.

Suppose now that the induction hypothesis holds for all $r'$ with $1\leq r'\leq r-1$, and let us prove that it holds for $r$. In other words, we assume that values of $c$, $\eta$ and $m_0$ exist and satisfy the requirements of the lemma for any $\epsilon>0$ and $r'\in [1,r-1]$. Let $\delta_1 = \eta(\epsilon/2,r-1)$, let $\delta_2 = \eta(\epsilon/2,1)$ and set $\eta(\epsilon,r)=\delta_1\delta_2$.  Let $\T \subseteq [n]^{(r)}$ with $|\T|\leq \eta(\epsilon,r){n\choose r}$. Let $c(\epsilon,r) = \min\{c(\epsilon/2,1),c(\epsilon/2,r-1)\}/2$. Let $m_*$ be sufficiently large that 
$$\exp(-c(\epsilon/2,1)m) + m\exp(-c(\epsilon/2,r-1)(m-1)) \leq \exp(-c(\epsilon,r)m)$$ 
for all $m\geq m_*$. Let $m_0(\epsilon,r)= \max\{m_0(\epsilon/2,1),m_0(\epsilon/2,r-1)+1,m_*\}$. Choose $X\in [n]^{(m)}$ uniformly at random. 

Let us bound the probability that $|X^{(r)}\cap \T| > \epsilon{m\choose r}$. We define two sets $V_1$ and $V_2$ of elements of $[n]$ as follows. Let $V_1 = \{v\in [n]:d(v)> \delta_1{n-1\choose r-1}\}$ and let $V_2 = \{v\in [n]:d(v)\leq \delta_1{n-1\choose r-1} \}$. $V_1$ and $V_2$ partition $[n]$ and we view the elements of $V_1$ as having high degree and those of $V_2$ as having low degree. 

Let $A$ be the event that $|X \cap V_1|> \epsilon m/2$. We wish to bound the probability that $A$ occurs. For this, we first find an upper bound on $|V_1|$. Each vertex of $V_1$ contains at least $\delta_1{n-1\choose r-1}$ elements of $\T$, so $r|\T| \geq \delta_1{n-1\choose r-1} |V_1|$, hence 
\begin{align*}
|V_1|& \leq \frac{r|\T|}{\delta_1{n-1\choose r-1}}\\
&\leq \frac{r\eta(\epsilon,r){n\choose r}}{ \delta_1{n-1\choose r-1}}\\
&=\eta(\epsilon,r)n/\delta_1 \\
&=\delta_2n.
\end{align*}
By the choice of $\delta_2$ and since $m\geq m_0(\epsilon/2,1)$, we then have $\pr(A) \leq \exp(-c(\epsilon/2,1)m)$.

For $v \in V_2$, let $B_v$ be the event that $|X^{(r-1)}\cap L(v)|> \epsilon{m-1\choose r-1}/2$, conditional on $v\in X$. Fix $v\in V_2$. Let us bound the probability of $B_v$. Given that $v\in X$, $X\backslash \{ v \}$ is a uniformly random subset of $[n]\backslash \{v\}$ of size $m-1$. Since $|L(v)| \leq \delta_1{n-1\choose r-1}$ and $\delta_1 = \eta(\epsilon/2,r-1)$, and since $m-1\geq m_0(\epsilon/2,r-1)$, the probability that $B_v$ occurs is no more than $\exp(-c(\epsilon/2,r-1)(m-1))$. Now let $B$ be the event that there is some element $v$ of $V_2$ which belongs to $X$ and for which $d_X(v)> \epsilon{m-1\choose r-1}/2$. By a simple union bound,
\begin{align*} 
\pr(B) &\leq \sum_{v\in V_2}\pr(v\in X)\pr(B_v)\\
&\leq |V_2|(m/n)\exp(-c(\epsilon/2,r-1)(m-1))\\
& \leq m\exp(-c(\epsilon/2,r-1)(m-1)).
\end{align*}

If neither $A$ nor $B$ occurs, then 
\begin{align*}
r|X^{(r)}\cap \T| &= \sum_{v\in X\cap V_1}d_X(v) + \sum_{v\in X\cap V_2}d_X(v)\\
&\leq |X\cap V_1| {m-1\choose r-1} + m\max_{v\in X\cap V_2} d_X(v)\\
& \leq (\epsilon m/2) {m-1\choose r-1} + (\epsilon m/2){m-1\choose r-1} \\
&= \epsilon m{m-1\choose r-1}.
\end{align*}
Thus $|X^{(r)}\cap \T|\leq \epsilon {m\choose r}$. This implies that 
\begin{align*}
\pr\left(|X^{(r)}\cap \T| > \epsilon{m\choose r}\right) &\leq \pr(A) + \pr(B) \\
&\leq \exp(-c(\epsilon/2,1)m) + m\exp(-c(\epsilon/2,r-1)(m-1)) \\
&\leq \exp(-c(\epsilon,r)m),
\end{align*} 
the last inequality holding since $m\geq m_*$. 
\end{proof}

\begin{lemma}
\label{nodense}
Let $r\in \N$. Let $\epsilon>0$. There exists $h(\epsilon,r)$ such that if $\s\subseteq [n]^{(r)}$ and $|\s|\geq (1-\eta(\epsilon,h)){n\choose r}$ then $l(\f) \leq h(\epsilon,r)$ for every $\f\subseteq \p[n]$ which contains no $(\epsilon,\s)$-fat set, where $\eta(\epsilon,r)$ is the constant defined in Lemma \ref{technical} and $h$ is a constant depending on $\epsilon$ and $r$ only. 
\end{lemma}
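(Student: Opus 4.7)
The plan is to deduce the lemma directly from Lemma \ref{technical} by a change of viewpoint: instead of thinking about $\s$, I focus on its complement. Set $\T = [n]^{(r)} \setminus \s$, so that by hypothesis $|\T| \leq \eta(\epsilon,r){n\choose r}$. A set $F\subseteq[n]$ fails to be $(\epsilon,\s)$-fat precisely when $|F^{(r)}\cap \s| < (1-\epsilon){|F|\choose r}$, i.e., when $|F^{(r)}\cap \T| > \epsilon{|F|\choose r}$. Since $\f$ contains no $(\epsilon,\s)$-fat set, every $F\in\f$ satisfies this inequality. This gives me a layer-by-layer handle on $\f$.

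Now I apply Lemma \ref{technical} with this $\T$ and with the same parameter $\epsilon$. For each $m$ with $n\geq m\geq m_0(\epsilon,r)$, if $X$ is chosen uniformly at random in $[n]^{(m)}$, then
$$\pr\!\left(|X^{(r)}\cap \T| > \epsilon{m\choose r}\right) \leq \exp(-c(\epsilon,r)m).$$
But the elements of $\f\cap [n]^{(m)}$ are all sets $X\in[n]^{(m)}$ satisfying exactly that inequality, so
$$\frac{|\f\cap [n]^{(m)}|}{\binom{n}{m}} \leq \exp(-c(\epsilon,r)m)$$
for every $m\geq m_0(\epsilon,r)$.

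To finish, split the Lubell mass at $m_0(\epsilon,r)$. For layers below $m_0$ I use the trivial bound $|\f\cap[n]^{(m)}|/{n\choose m}\leq 1$ (contributing at most $m_0(\epsilon,r)$ in total), and for layers above $m_0$ I use the exponential bound just obtained:
$$l(\f) \leq m_0(\epsilon,r) + \sum_{m\geq m_0(\epsilon,r)}\exp(-c(\epsilon,r)m) \leq m_0(\epsilon,r) + \frac{1}{1-\exp(-c(\epsilon,r))}.$$
Taking $h(\epsilon,r)$ to be (any upper bound for) this quantity proves the lemma. There is no real obstacle at this stage: all the work has already been done in Lemma \ref{technical}, whose concentration statement is tailored exactly to translate the global smallness of $\T$ into layer-wise density bounds on $\f$, after which the rest is a geometric sum.
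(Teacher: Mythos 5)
Your proposal is correct and is essentially identical to the paper's own proof: both pass to the complement $\T=[n]^{(r)}\setminus\s$, invoke Lemma \ref{technical} to bound each layer's density by $\exp(-c(\epsilon,r)m)$, and sum the resulting geometric series after cutting at $m_0(\epsilon,r)$, arriving at the same constant $h(\epsilon,r)=m_0(\epsilon,r)+1/(1-e^{-c(\epsilon,r)})$.
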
 
\begin{proof}
If $\f\subseteq \p[n]$ contains no $(\epsilon,\s)$-fat set then $|F^{(r)}\cap \overline{\s}|> \epsilon {m\choose r}$ for every $F\in \f$, where $\overline{\s}$ is the complement of $\s$ in $[n]^{(r)}$, i.e. $\overline{\s} = \{x\in [n]^{(r)}:x\not\in \s \}$. By assumption $|\overline{\s}| \leq \eta(\epsilon,r){n\choose r}$, hence by Lemma \ref{technical} applied to $\T = \overline{\s}$, we have 
\begin{align*}
l(\f) &= \sum_{m=0}^{n}\frac{|\f\cap [n]^{(m)}|}{{n\choose m}}\\
&\leq m_0(\epsilon,r) + \sum_{m=m_0(\epsilon,r)}^n \exp(-c(\epsilon,r)m)\\
&\leq m_0(\epsilon,r) + 1/(1-e^{-c(\epsilon,r)}).
\end{align*}
Thus the lemma certainly holds if we let $h(\epsilon,r) = m_0(\epsilon,r) + 1/(1-e^{-c(\epsilon,r)})$.
\end{proof}

\section{Proof of Theorem \ref{maintheorem}}
\label{mainproof}

We are now ready to prove Theorem \ref{maintheorem}. The backbone of the proof is the same as the argument described in Section \ref{simplified}. We stress this point because it is easy to get lost in the details of the proof below. Lemma \ref{intermediate} represents one step of the iteration, while Lemma \ref{sequences} is in essence the result of applying Lemma \ref{intermediate} (at most) $2m+1$ times. After proving these two lemmas, we finish the proof of the theorem by extracting the desired poset from the sequences that we constructed.  

Let $P$ be a partially ordered set. Let $m = |P|$. By Lemmas \ref{universality} and \ref{universality2} there exists $\epsilon >0$ such that for each $n\geq 2m$, every $U\in \U(n,m,\epsilon)$ and every $D\in \D(n,m,\epsilon)$ contains an induced copy of $P$. 

Let us define some constants. Here $f$ represents the contant defined in Lemma \ref{pivotsbound}, $\eta$ represents the constant defined in Lemma \ref{technical} and $h$ represents the constant defined in Lemma \ref{nodense}. Let 
$$\epsilon_1 = \epsilon$$ 
and for $2\leq j\leq 2m+1$ let 
$$\epsilon_{j}=\min\{ \epsilon_{j-1}, \,\eta(\epsilon_{j-1},i):i\in [0,m]\}.$$ 
For $2\leq j\leq 2m+1$, let 
$$q_j = \max_{i\in [0,m]}h(\epsilon_{j-1},i).$$
Let $$q = \max_{j\in [2,2m+1]}q_j.$$ 
Let $$p = \max_{i\in [0,m],\, j\in [1,2m+1]}f(\epsilon_j,i).$$
The constants $\epsilon_j$, $q$ and $p$ above are defined precisely so that the following statement is true.  
\begin{lemma}
\label{intermediate}
Let $0 \leq d \leq 2m$. Let $0\leq a,b\leq m$. Let $n\in \N$. If $d\geq 1$, let $\s_0\subseteq [n]^{(r_0)}$, $\s_1 \subseteq [n]^{(r_1)}$,\ldots, $\s_{d-1} \subseteq [n]^{(r_{d-1})}$ for some $0\leq r_0, r_1,\ldots, r_{d-1}\leq m$, and suppose that $\s_i$ is $(\epsilon_{2m+2-d},[n]^{(r_i)})$-fat for all $i$, $0\leq i\leq d-1$.  Let $\f\subseteq \p[n]$ with $l(\f)> 4mq+2p$. Then there exists $Y\in \f$ such that 
\begin{enumerate}
\item
Either $Y$ is $(\epsilon_{2m+1-d},a)$-flexible, $l_{\emptyset,Y}(\f)\geq l(\f)/2 - 2mq - p$ and (if $d\geq 1$) $Y$ is $(\epsilon_{2m+1-d},\s_i)$-fat for all $i$, $0\leq i\leq d-1$, or
\item
$Y$ is $(\epsilon_{2m+1-d},b)$-anti-flexible, $l_{Y,[n]}(\f)\geq l(\f)/2 - 2mq-p$ and (if $d\geq 1$) $[n] \backslash Y$ is $(\epsilon_{2m+1-d},\s_i)$-fat for all $i$, $0\leq i\leq d-1$. 
\end{enumerate}
\end{lemma}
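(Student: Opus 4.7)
The plan is to follow the (heuristic) argument of Section \ref{simplified} essentially verbatim, with Lemmas \ref{pivotsbound} and \ref{nodense} standing in for the two simplifying assumptions that were made there. Split $\f$ into $\f_- = \{F \in \f : |F| \leq n/2\}$ and $\f_+ = \{F \in \f : |F| \geq n/2\}$; since $l(\f) \leq l(\f_-)+l(\f_+)$, at least one of $l(\f_-), l(\f_+)$ exceeds $l(\f)/2$. I will treat the case $l(\f_-) \geq l(\f)/2$ and produce a $Y$ satisfying alternative 1; the case $l(\f_+) \geq l(\f)/2$ follows by the standard complementation $F \mapsto [n]\setminus F$, which preserves Lubell mass, swaps $r$-pivots with $r$-anti-pivots, and turns the condition ``$Y$ is $(\epsilon,\s_i)$-fat'' into ``$[n]\setminus Y$ is $(\epsilon,\s_i)$-fat'', thereby producing alternative 2 on the original family.

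Working inside $\f_-$, I identify two kinds of bad elements. First, let $\f^{\text{flx}}$ consist of those $A \in \f_-$ which are not $(\epsilon_{2m+1-d},a)$-flexible in $\f_-$. A pivot of $A$ in $\f^{\text{flx}}$ is automatically a pivot of $A$ in the larger family $\f_-$, so no element of $\f^{\text{flx}}$ is $(\epsilon_{2m+1-d},a)$-flexible \emph{in} $\f^{\text{flx}}$, and Lemma \ref{pivotsbound} (applicable because $\f_- \subseteq \f^{\text{flx}}$ has no sets of size greater than $n/2$) yields $l(\f^{\text{flx}}) \leq f(\epsilon_{2m+1-d},a) \leq p$. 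Second, for each $i \in \{0,\dots,d-1\}$, let $\f^{\text{fat}}_i$ consist of those $A \in \f_-$ that are not $(\epsilon_{2m+1-d},\s_i)$-fat; by hypothesis $|\s_i| \geq (1-\epsilon_{2m+2-d}){n\choose r_i}$, and the defining recursion of the $\epsilon_j$ gives $\epsilon_{2m+2-d} \leq \eta(\epsilon_{2m+1-d}, r_i)$, so Lemma \ref{nodense} applies and gives $l(\f^{\text{fat}}_i) \leq h(\epsilon_{2m+1-d},r_i) \leq q$. Discarding all bad sets from $\f_-$ yields a family $\f'$ with
\[ l(\f') \;\geq\; l(\f_-) - p - dq \;\geq\; \tfrac{1}{2}l(\f) - p - 2mq \;>\; 0, \]
where the last inequality is the hypothesis $l(\f) > 4mq+2p$.

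Finally, I apply Lemma \ref{centred} to $\f'$ to extract some $Y \in \f'$ with $l_{\emptyset,Y}(\f') \geq l(\f')$; since $\f' \subseteq \f$ we get $l_{\emptyset,Y}(\f) \geq l(\f)/2 - 2mq - p$, exactly the bound demanded by alternative 1. Membership of $Y$ in $\f'$ gives the fatness conclusion at each $\s_i$ and $(\epsilon_{2m+1-d},a)$-flexibility in $\f_-$; since enlarging the family can only increase the set of pivots of a given set, $Y$ is also $(\epsilon_{2m+1-d},a)$-flexible in $\f$, completing alternative 1. The only real obstacle is the parameter bookkeeping: one must verify that the tiered constants $\epsilon_j$, $p$, $q$ set up just before the lemma uniformly dominate every invocation of Lemmas \ref{pivotsbound} and \ref{nodense} that can occur at any admissible value of $d \in [0,2m]$ and any $r_i,a,b \in [0,m]$; this is exactly what the telescoping $\epsilon_j = \min\{\epsilon_{j-1},\eta(\epsilon_{j-1},i):i\in[0,m]\}$ and the maxima defining $p$ and $q$ were engineered to guarantee.
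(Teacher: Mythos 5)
Your proposal is correct and follows essentially the same route as the paper: split into $\f_-$ and $\f_+$, discard the non-flexible sets via Lemma \ref{pivotsbound} and the non-fat sets via Lemma \ref{nodense} (using the tiered constants to bound each discarded piece by $p$ and $q$ respectively), apply Lemma \ref{centred} to what remains, and handle the $\f_+$ case by complementation. The only blemish is the reversed inclusion ``$\f_-\subseteq \f^{\text{flx}}$'' (you mean $\f^{\text{flx}}\subseteq\f_-$), which is clearly a typo and does not affect the argument.
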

\begin{proof}
Let $\f_-=\{F\in \f: |F|\leq n/2\}$ and $\f_+ = \{F\in \f: |F|\geq n/2 \}$. Clearly either $l(\f_-)\geq l(\f)/2$ or $l(\f_+)\geq l(\f)/2$.

Suppose first that $l(\f_-)\geq l(\f)/2$. Let $\f_-^*$ be the elements of $\f_-$ which are not $(\epsilon_{2m+1-d},a)$-flexible. By Lemma \ref{pivotsbound}, we have $l(\f_-^*)\leq f(\epsilon_{2m+1-d},a)\leq p$ (here notice that if $F$ is not $(\epsilon_{2m+1-d},a)$-flexible in $\f_-$ then it isn't $(\epsilon_{2m+1-d},a)$-flexible in $\f_-^*$ either). If $d\geq 1$, for each $i\in [0,d-1]$ let $\f_-^i$ be the elements of $\f_-$ which are not $(\s_i,\epsilon_{2m+1-d})$-fat; by definition of $\epsilon_{2m+2-d}$, we have $\epsilon_{2m+2-d}\leq \eta(\epsilon_{2m+1-d},r_i)$, and by definition of $q$, we have $q\geq q_{2m+2-d}\geq h(\epsilon_{2m+1-d},r_i)$ so that, by Lemma \ref{nodense}, $l(\f_-^i)\leq q$. Now if $d=0$ let $\f_-^\star = \f_- \backslash \f_-^*$, and if $d\geq 1$  let $\f_-^\star = \f_- \backslash \left( \f_-^*\cup \f_-^0 \cup \f_-^1 \cup \f_-^2 \cup \cdots \cup \f_-^{d-1} \right)$. Either way, we clearly have
$\l(\f_-^\star)\geq l(\f_-) - 2mq - p\geq l(\f)/2 -2mq -p>0$. By Lemma \ref{centred} there exists $Y\in \f_-^\star$ with $\l_{\emptyset,Y}(\f_-^\star)\geq l(\f_-^\star)\geq l(\f)/2-2mq -p$. $Y$ satisifies the requirements of the lemma in this case.

Suppose now that $l(\f_+)\geq l(\f)/2$. Let $\widetilde{\f}=\{[n] \backslash F:F\in \f_+ \}$. Then $|F|\leq n/2$ for every $F\in \widetilde{\f}$ and $l(\widetilde{\f}) = l(\f_+)$. Therefore by the same argument as in the previous case, there exists $Y\in \widetilde{\f}$ which is $(\epsilon_{2m+1-d},\s_i)$-fat for all $i$, $0\leq i\leq d-1$ (if $d\geq 1$), is $(b,\epsilon_{2m+1-d})$-flexible and $l_{\emptyset,Y}(\f)\geq l(\widetilde{\f}) - 2mq -p$. But notice that a $b$-pivot for $Y$ corresponds to a $b$-anti-pivot of $[n] \backslash Y \in \f_+$, so that if we let  $Y' = [n] \backslash Y$ then $Y'$ is $(\epsilon_{2m+1-d},b)$-anti-flexible. Moreover $l_{\emptyset,Y}(\widetilde{\f}) = l_{[n]\backslash Y,[n]}(\f_+)$. Thus $Y'$ satisfies the requirements of the lemma in this case.  
\end{proof}

Let us remark that, while we stated Lemma \ref{intermediate} for a family of subsets of $[n]$ for ease of notation, the lemma obviously remains true for a family of subsets of any set $Z$; in fact, in the next lemma, we shall apply Lemma \ref{intermediate} to families whose ground sets are subsets of $[n]$.  
\begin{lemma}
\label{sequences}
Let $\f\subseteq \p[n]$ with 
$$l(\f)\geq  2^{2m+1}(2m+1) + \sum_{i=1}^{2m+1} 2^{i}(4mq+2p).$$ 
Then there exists $t\in [0,2m]$ and sequences $(\f_i)^t_{i=-1}$, $(A_i)^t_{i=-1}$, $(B_i)^t_{i=-1}$, $(a_i)^t_{i=-1}$, $(b_i)^t_{i=-1}$, $(\s_i)^t_{i=0}$ where $\f_{-1} = \f$, $A_{-1} = [n]$, $B_{-1} = \emptyset$, $a_{-1} = b_{-1} = -1$, and 
\begin{enumerate}
\item
For each $i\in [-1,t]$, $\f_i$ is a subfamily of $\f$ with $B_i\subseteq F\subseteq A_i$ for all $F\in \f$; 
\item
For each $i\in [0,t]$, either $a_i = a_{i-1} + 1$ and $b_i = b_{i-1}$, or $a_i = a_{i-1}$ and $b_i = b_{i-1} + 1$; 
\item
For each $i\in [0,t]$, if $a_i = a_{i-1} + 1$ then $A_i\in \f_{i-1}$, $B_i = B_{i-1}$ and $A_i \backslash B_{i-1}$ is an $(\epsilon_{2m+1-t},a_i)$-flexible set in $(\f_{i-1})_{B_{i-1},A_{i-1}}$ with set of $a_i$-pivots $\s_i$. If on the other hand $b_i = b_{i+1}+1$ then $B_i\in \f_{i-1}$, $A_i = A_{i-1}$ and $B_i \backslash B_{i-1}$ is an $(\epsilon_{2m+1-t},b_i)$-anti-flexible set in $(\f_{i-1})_{B_{i-1},A_{i-1}}$ with set of $b_i$-anti-pivots $\s_i$; 
\item
$A_t \backslash B_t$ is $(\epsilon_{2m+1-t},\s_i)$-fat for each $i \in [0,t]$;
\item
$l_{B_t,A_t}(\f_t)\geq 2^{2m-t}(2m+1)+\sum_{i=1}^{2m-t} 2^{i}(2mq+p)$;
\item
Either $a_t = m$ or $b_t = m$.
\end{enumerate}
\end{lemma}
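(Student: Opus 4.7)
The plan is to construct the sequences inductively by iterating Lemma \ref{intermediate}. I initialise at $i=-1$ with $\f_{-1}=\f$, $A_{-1}=[n]$, $B_{-1}=\emptyset$, $a_{-1}=b_{-1}=-1$, and at each step $i\geq 0$ I apply Lemma \ref{intermediate} to $(\f_{i-1})_{B_{i-1},A_{i-1}}$, regarded as a family of subsets of the local ground set $A_{i-1}\setminus B_{i-1}$, with parameters $d=i$, $a=a_{i-1}+1$, $b=b_{i-1}+1$, and the previously constructed $\s_0,\ldots,\s_{i-1}$ (restricted to that ground set). In the flexible alternative I set $A_i=Y\cup B_{i-1}$, $B_i=B_{i-1}$, $(a_i,b_i)=(a_{i-1}+1,b_{i-1})$, take $\s_i$ to be the set of $a_i$-pivots of $Y$ guaranteed by flexibility, and define $\f_i$ to be the subfamily of $\f_{i-1}$ consisting of sets $F\subseteq A_i$. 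In the anti-flexible alternative I set $B_i=Y\cup B_{i-1}$, $A_i=A_{i-1}$, $(a_i,b_i)=(a_{i-1},b_{i-1}+1)$, take $\s_i$ to be the set of $b_i$-anti-pivots of $Y$, and let $\f_i$ consist of sets $F\in\f_{i-1}$ with $F\supseteq B_i$. I stop at the first $t$ for which $a_t=m$ or $b_t=m$.

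To verify clause (5), write $c=2mq+p$ so that $4mq+2p=2c$. Lemma \ref{intermediate} provides the recursion $l_{B_i,A_i}(\f_i)\geq l_{B_{i-1},A_{i-1}}(\f_{i-1})/2-c$, and combining this with the hypothesis $l(\f)\geq 2^{2m+1}(2m+1)+\sum_{j=1}^{2m+1}2^j\cdot 2c$ one obtains by a routine induction that $l_{B_i,A_i}(\f_i)\geq 2^{2m-i}(2m+1)+\sum_{j=1}^{2m-i}2^j c$ for each $i$ reached. For $i\leq 2m$ this bound easily exceeds $2c$, so Lemma \ref{intermediate} remains applicable at every step. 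The bound $t\leq 2m$ then follows from the observation $a_i+b_i=i-1$: since neither variable exceeds $m$ during the iteration, at $i=2m$ the identity forces one of them to equal $m$ (and it may well have reached $m$ earlier). Clauses (1), (2), (3) and (6) are immediate from the construction.

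Clause (4) is where the tailored choice of the constants $\epsilon_j$ in the preamble pays off. By the very definition of $(\epsilon_{2m+1-i},r_i)$-flexibility or anti-flexibility, the newly introduced $\s_i$ satisfies $|(A_i\setminus B_i)^{(r_i)}\cap\s_i|\geq(1-\epsilon_{2m+1-i})\binom{|A_i\setminus B_i|}{r_i}$, so $A_i\setminus B_i$ is $(\epsilon_{2m+1-i},\s_i)$-fat by construction; for each earlier $\s_j$ with $j<i$, Lemma \ref{intermediate} outputs precisely the assertion that the new $A_i\setminus B_i$ is $(\epsilon_{2m+1-i},\s_j)$-fat. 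Iterating these up to step $t$ yields clause (4). I expect the main technical nuisance — though ultimately routine — to be checking that the fatness condition output at step $i$ matches the fatness hypothesis required on the $\s_j$ at step $i+1$; the identity $\epsilon_{2m+2-(i+1)}=\epsilon_{2m+1-i}$ combined with the recursive definition $\epsilon_j=\min\{\epsilon_{j-1},\,\eta(\epsilon_{j-1},r):r\in[0,m]\}$ threads the constants through exactly, and the same care must be exercised in keeping track of which sets live in $[n]$ versus the shrinking local ground set $A_i\setminus B_i$ when re-applying Lemma \ref{intermediate}.
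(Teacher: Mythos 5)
Your proposal is correct and follows essentially the same route as the paper: iterate Lemma \ref{intermediate} on $(\f_{i-1})_{B_{i-1},A_{i-1}}$ over the shrinking ground set $A_{i-1}\setminus B_{i-1}$ with $a=a_{i-1}+1$, $b=b_{i-1}+1$, make the same case split and assignments, verify clause (5) by the recursion $L_i\geq L_{i-1}/2-(2mq+p)$, thread the fatness constants via $\epsilon_{2m+2-(i+1)}=\epsilon_{2m+1-i}$, and terminate by the counting $a_i+b_i=i-1$. This matches the paper's argument in all essentials.
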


\begin{proof}

We shall prove that if for some $d\in [0,2m]$, there exist sequences $(\f_i)^{d-1}_{i=-1}$, $(A_i)^{d-1}_{i=-1}$, $(B_i)^{d-1}_{i=-1}$, $(a_i)^{d-1}_{i=-1}$, $(b_i)^{d-1}_{i=-1}$, $(\s_i)^{d-1}_{i=0}$ as above satisfying conditions 1 to 5 of the lemma except condition 6, then we can find $\f_d$, $A_d$, $B_d$, $a_d$, $b_d$ and $\s_d$ to enlarge each sequence, so that the new sequences satisfy conditions 1 to 5 of the lemma and either $a_{d} = a_{d-1} + 1$ or $b_{d} = b_{d-1} + 1$. This is enough to prove the lemma. Indeed, we start with $a_{-1} = -1$ and $b_{-1} = -1$ and whenever we enlarge the sequences, either $a_{d}$ increases by one or $b_{d}$ increases by one. Hence after at most $2m+1$ enlargements, it must be the case that condition 6 of the lemma is satisfied, and we let $t$ be the last value of $d$ obtained. 

So suppose that for some $d\in [0,2m]$ we have sequences $(\f_i)^{d-1}_{i=-1}$, $(A_i)^{d-1}_{i=-1}$, $(B_i)^{d-1}_{i=-1}$, $(a_i)^{d-1}_{i=-1}$, $(b_i)^{d-1}_{i=-1}$, $(\s_i)^{d-1}_{i=0}$ satisfying conditions 1 to 5 of the lemma, but where $a_{d-1}<m$ and $b_{d-1} < m$. By condition 5, $l_{A_{d-1},B_{d-1}}(\f_{d-1})\geq 2(2m+1) + 2(2mq + p) > 4mq + 2p$. Therefore we may apply  Lemma \ref{intermediate} to $(\f_{d-1})_{B_{d-1},A_{d-1}}$, viewed as a family of subsets of $A_{d-1} \backslash B_{d-1}$. As we view the ground set of $(\f_{d-1})_{B_{d-1},A_{d-1}}$ to be $A_{d-1} \backslash B_{d-1}$, when applying the lemma, we take the sequence of $\s_i$'s to be $(\s_i')_{i=0}^{d-1}$, where $\s_i' = \s_i\cap \p(A_{d-1}\backslash B_{d-1})$ for each $i$. We also let $a = a_{d-1} + 1$ and $b = b_{d-1}+1$. There are two possible outcomes from applying the lemma. 
\begin{description}[leftmargin=*]
\item[Case 1.] We obtain $Y \in (\f_{d-1})_{B_{d-1},A_{d-1}}$ which is $(\epsilon_{2m+1-d},\s_i')$-fat for all $i$, $0\leq i\leq d-1$, $Y$ is $(\epsilon_{2m+1-d},a)$-flexible and $l_{\emptyset,Y}((\f_{d-1})_{B_{d-1},A_{d-1}})\geq l((\f_{d-1})_{B_{d-1},A_{d-1}})/2 - 2mq - p$. We let $b_{d} = b_{d-1}$, $a_{d} = a$, $A_{d} = B_{d-1} \cup Y$, $B_{d} = B_{d-1}$, 
$$\f_{d} = \{ F \in \f_{d-1}: B_d \subseteq F\subseteq A_d\},$$ 
and we let $\s_{d}$ be the set of $a$-pivots for $Y$ in $(\f_{d-1})_{B_{d-1},A_{d-1}}$. 

Let us check that this is a valid enlargement of the sequences, i.e. that conditions 1 to 5 of the lemma are satisfied. Conditions 1 and 2 need only be checked for $i=d$ since for smaller $i$ they are inherited from the sequences that we are enlarging. Condition 1 is immediately satisfied by the definition of $\f_d$. Condition 2 is equally trivally satisfied since $a_d = a = a_{d-1} + 1$ and $b_d = b_{d-1}$. Condition 3 also only needs to be checked for $i=d$ because we know it holds for the sequences we are enlarging, and $\epsilon_{2m+1-(d-1)}\leq \epsilon_{2m+1-d}$. Now $Y \in (\f_{d-1})_{B_{d-1},A_{d-1}}$, so that $A_d  = B_{d-1} \cup Y \in \f_{d-1}$, and $A_d\backslash B_{d-1} = Y$, which is $(\epsilon_{2m+1-d},a)$-flexible in $(\f_{d-1})_{B_{d-1},A_{d-1}}$, with sets of $a$-pivots $\s_d$. So condition 3 is satisfied.  Condition 4 is satisfied because $A_d\backslash B_d = Y$, which is $(\epsilon_{2m+1-d},\s_i')$-fat for all $i\in [0,d-1]$ (hence $(\epsilon_{2m+1-d},\s_i)$-fat too), and also $(\epsilon_{2m+1-d},\s_d)$-fat by definition of $\s_d$. Finally, 
\begin{align*}
l_{B_d,A_d}(\f_d) &= l_{\emptyset,A_d\backslash B_d}((\f_d)_{B_d,A_d}) \\
&= l_{\emptyset,Y}((\f_{d-1})_{B_{d-1},A_{d-1}}) \\
&\geq l_{B_{d-1},A_{d-1}}(\f_{d-1})/2 - 2mq - p \\
&\geq  2^{2m-d}(2m+1)+\sum_{i=1}^{2m-d} 2^{i}(2mq+p),
\end{align*}
and so condition 5 is satisfied.
\item[Case 2.] We obtain $Y \in (\f_{d-1})_{B_{d-1},A_{d-1}}$ such that $(A_{d-1} \backslash B_{d-1}) \backslash Y$ is \newline $(\epsilon_{2m+1-d},\s_i')$-fat for all $i$, $0\leq i\leq d-1$, $Y$ is $(\epsilon_{2m+1-d},b)$-anti-flexible, and \newline $l_{Y,B_{d-1} \backslash A_{d-1}}((\f_{d-1})_{B_{d-1},A_{d-1}})\geq l((\f_{d-1})_{B_{d-1},A_{d-1}})/2 - 2mq-p$. Then we let $a_{d} = a_{d-1}$, $b_{d} = b$, $A_{d} = A_{d-1}$, $B_{d} = B_{d-1} \cup Y$,
$$\f_{d}=\{F \in \f_{d-1}: B_d \subseteq F\subseteq A_d\},$$ 
and we let $\s_{d}$ be the set of $b$-anti-pivots for $Y$ in $(\f_{d-1})_{B_{d-1},A_{d-1}}$. 

It is again straightforward, but tedious, to check that this is a valid enlargement of the sequences. \qedhere
\end{description}
\end{proof}

We are now able to finish the proof of Theorem \ref{maintheorem}. Let $n\in \N$ and let $\f\subseteq \p[n]$ with 
$$l(\f)\geq  2^{2m+1}(2m+1) + \sum_{i=1}^{2m+1} 2^{i}(2mq+p).$$
Let $(\f_i)^t_{i=-1}$, $(A_i)^t_{i=-1}$, $(B_i)^t_{i=-1}$, $(a_i)^t_{i=-1}$, $(b_i)^t_{i=-1}$, $(\s_i)^t_{i=0}$ be the sequences satisfying the conclusion of Lemma \ref{sequences}. Notice that it follows from the definition of the sequences that $\emptyset = B_{-1}\subseteq B_0\subseteq B_1\subseteq \cdots \subseteq B_t\subseteq A_t \subseteq A_{t-1}\subseteq \cdots \subseteq A_0\subseteq A_{-1} = [n]$. 

There are two cases to consider: either $a_t = m$ or $b_t = m$. Suppose first that $a_t = m$. Then there exist $i_0 < i_1 < i_2 < \ldots <i_m$ such that for each $k$, $a_{i_k} = k$ and $a_{i_k}> a_{i_k-1}$, so that $\s_{i_k}$ is a set of $k$-pivots for $A_{i_k} \backslash B_{i_k-1}$ in $(\f_{i_k-1})_{B_{i_k-1},A_{i_k-1}}$. Notice that since all the elements of $(\f_{i_k-1})_{B_{i_k-1},A_{i_k-1}}$ are obtained from $\f_{i_k-1}$ by removing $B_{i_k-1}$ from them and also $A_{i_k} \in \f_{i_k-1} $, $\s_{i_k}$ is also a set of $k$-pivots for $A_{i_k}$ in $\f_{i_k - 1}$. 

Now, let $X = A_{i_m} \backslash B_{i_m}$. Since $l_{B_{i_m},A_{i_m}}(\f_{i_m})\geq 2m+1$, $|X|\geq 2m$. By condition 5, $X$ is $(\epsilon_{2m+1-i_m},\s_{i_k})$-fat for all $k$, $0\leq k\leq m$. As $\epsilon_{2m+1}\leq \epsilon_{2m}\leq \cdots\leq \epsilon_1 = \epsilon$, this implies that $X$ is $(\epsilon,\s_{i_k})$-fat for each $k$, $0\leq k\leq m$. For a fixed $k$, let $V_k = \s_{i_k}\cap X^{(k)}$, i.e. $V_k$ is the set of $k$-pivots for $A_{i_k}$ in $\f_{i_k - 1}$ which are contained in $X$. For each $x\in V_k$, let $w_x$ be a witness of $x$ being a $k$-pivot of $A_{i_k}$ in $\f_{i_k-1}$. Let $W_k = \bigcup_{x\in V_k}\{w_x\}$ and let $W = \bigcup_{k=0}^m W_k$. We shall prove the following claim.

\begin{claim}
\label{finalstep}
$W$, viewed as a subposet of $\f$, is isomorphic to an element of $\D(|X|,m,\epsilon)$. 
\end{claim}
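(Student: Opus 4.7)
The natural candidate for the isomorphism is the map $\psi\colon \bigcup_{k=0}^m V_k\to W$ defined by $\psi(x)=w_x$. Viewing $\bigcup_k V_k$ as a subposet of $X^{(\leq m)}$ ordered by reverse inclusion (so that identifying $X$ with $[|X|]$ realises $(X^{(\leq m)},\supseteq)$ as $D(|X|,m)$), the plan is to show that $\psi$ is an order-preserving bijection onto $W$ and that $|V_k|\geq (1-\epsilon){|X|\choose k}$ for each $k$. The layer estimate is immediate: $|V_k|=|X^{(k)}\cap\s_{i_k}|\geq (1-\epsilon){|X|\choose k}$ by the fatness of $X$ with respect to $\s_{i_k}$, combined with $\epsilon_{2m+1-i_m}\leq\epsilon_1=\epsilon$ as already noted before the claim. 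Hence, once order-preservation is established, $W$ will be isomorphic to $\bigcup_k V_k$, which is an element of $\D(|X|,m,\epsilon)$.

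For order-preservation, I would retrace the argument sketched in Section \ref{simplified}, adapted to the nested sets $B_{i_k-1}$. The inclusions $B_{i_k-1}\subseteq B_{i_m}$ and $A_{i_m}\subseteq A_{i_k}$ give $X\subseteq A_{i_k}\setminus B_{i_k-1}$, so each $V_k\subseteq X^{(k)}$ sits inside the correct ground set for Observation \ref{obs} to apply. For distinct $x\in V_k$ and $y\in V_j$ with $k\leq j$, there are three cases. If $k=j$, then $w_x,w_y\in\f_{i_k-1}$ both have size $|A_{i_k}|$ and $w_x\cap A_{i_k}=A_{i_k}\setminus x\neq A_{i_k}\setminus y=w_y\cap A_{i_k}$, so $w_x\neq w_y$ and they are incomparable in $\f$, matching the incomparability of $x$ and $y$ in the reverse-inclusion order. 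If $k<j$ and $x\subsetneq y$, write $w_y=(A_{i_j}\setminus y)\cup Y_y$ with $Y_y\subseteq A_{i_j-1}\setminus A_{i_j}$; since $i_j-1\geq i_k$ we have $w_y\subseteq A_{i_j-1}\subseteq A_{i_k}$, and $w_y\cap x=\emptyset$ because $x\subseteq y$ avoids $A_{i_j}\setminus y$ and $x\subseteq A_{i_j}$ avoids $Y_y$. Observation \ref{obs}, applied to the pivot $x$ of $A_{i_k}\setminus B_{i_k-1}$ with witness $w_x\setminus B_{i_k-1}$ in the ground set $A_{i_k-1}\setminus B_{i_k-1}$, then forces $w_y\subseteq w_x$; any $z\in y\setminus x$ lies in $A_{i_k}\setminus x\subseteq w_x$ but is excluded from $w_y$ since $z\in y$, giving the strict inclusion $w_y\subsetneq w_x$ required to match $y<_D x$. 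If $k<j$ and $x,y$ are incomparable, using an element $z_1\in x\setminus y$ and one $z_2\in y\setminus x$ separates $w_x$ and $w_y$ in both directions. Injectivity of $\psi$ then follows from the same case analysis.

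The main obstacle is not conceptual but bookkeeping. Observation \ref{obs} is phrased for a pivot inside the ambient ground set, whereas in our setting the pivot $x$ and witness $w_x$ live naturally in the shifted interval $[B_{i_k-1},A_{i_k-1}]$. The essential verifications are that $x\cap B_{i_k-1}=\emptyset$ (from $x\subseteq X\subseteq [n]\setminus B_{i_m}$), that $w_y$ sits in the correct ambient set $A_{i_k-1}$ (from the decreasing chain of $A_{i_d}$'s), and that the $Y$-parts of the witnesses avoid the pivots in question (from the disjointness of $Y_y$ from $A_{i_j}$ together with $x\subseteq A_{i_j}$). Once these are in place, the Section \ref{simplified} argument transfers almost verbatim, and combined with the fatness-based layer estimate it delivers the claim.
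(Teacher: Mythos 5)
Your proposal is correct and follows essentially the same route as the paper: the same map $x\mapsto w_x$, the same reverse-inclusion ordering on $\bigcup_k V_k$ with the layer sizes coming from fatness, and the same three-case comparison of witnesses via Observation \ref{obs}. Your extra bookkeeping (checking $w_y\cap x=\emptyset$ explicitly and using separating elements in both directions in the incomparable case) only makes explicit what the paper leaves implicit.
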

\begin{proof}

Order the elements of $V$ so that $x\leq y$ if and only if $x\supseteq y$. Up to relabelling of the elements of $X$ it is clear that $V\in \D(|X|, m, \epsilon)$, since $X$ is $(\epsilon,V_k)$-fat for every $k$, $0\leq k\leq m$.

Let $\psi: V\longrightarrow W$ be the map sending $x\in V$ to $w_x\in W$. We shall prove that $\psi$ is an isomorphism between $(V,\leq)$ and $(W,\subseteq)$, which proves the claim.

Suppose $x,y\in V$ where $w_x\in \f_{i_{k_1}-1}$ for some $i_{k_1}$ and $w_y\in \f_{i_{k_2}-1}$ for some $i_{k_2}$ and without loss of generality $k_1\leq k_2$. If $k_1=k_2$ and $x\neq y$ then $x$ and $y$ are incomparable in $V$ and $w_x$ and $w_y$ are two distinct elements of $\f$ of the same size, hence are also incomparable in $\f$. So let us assume that $k_1<k_2$ and $x\neq y$. Clearly since $|x|<|y|$, either $x>y$ (if $x\subsetneq y$) or $x$ and $y$ are incomparable. If $x\subsetneq y$ then by Obervation \ref{obs} $w_y\subsetneq w_x$ since $w_y\subseteq A_{i_{k_1}}$. If $x$ and $y$ are incomparable in $V$ then there exists $z\in x\backslash y$, but since $x\subseteq X\subseteq A_{i_{k_2}}$ it must be the case that $z\in w_y$, and so since $z\not\in w_x$, $w_x$ and $w_y$ must be incomparable. This shows that $\psi$ preserves the order relation and finishes the proof of the claim. 
\end{proof}
By Claim \ref{finalstep} $\f$ contains an induced poset isomorphic to an element of $\D(|X|,m,\epsilon)$, and since $|X|\geq 2m$ it contains an induced copy of $P$ by Lemma \ref{universality2} and Lemma \ref{universality}. This finishes the proof of Theorem \ref{maintheorem} in the case where $a_t  = m$. 

Suppose now that $b_t = m$. Then there exist $i_0 < i_1 < \ldots < i_m$ such that for each $k$, $b_{i_k} = k$ and $b_{i_k}>b_{i_k - 1}$, so that $\s_{i_k}$ is a set of $k$-anti-pivots for $B_{i_k}$ in $(\f_{i_k - 1})_{B_{i_k-1},A_{i_k-1}}$. As above, it is easily seen that $\s_{i_k}$ is also a set of $k$-anti-pivots for $B_{i_k}$ in $\f_{i_k - 1}$. 

Let $X = A_{i_m} \backslash B_{i_m}$; $|X|\geq 2m$ as before. For a fixed $k$, $0\leq k\leq m$, let $V'_k = \s_{i_k}\cap X^{(k)}$, so $V'_k$ is the set of $k$-anti-pivots of $B_{i_k}$ which are contained in $X$. For $x\in V'_k$ let $w_x$ be a witness of $x$ being a $k$-anti-pivot of $B_{i_k}$ in $\f_{i_k -1}$. Let $W'_k = \bigcup_{x\in V'_k}\{w_x\}$ and let $W' = \bigcup_{k=0}^mW'_k$. In a similar fashion as above, we have the following claim.

\begin{claim}
\label{finalstep2}
$W'$, viewed as a subposet of $\f$, is isomorphic to an element of $\U(|X|,m,\epsilon)$. 
\end{claim}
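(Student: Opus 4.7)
The plan is to mirror the proof of Claim \ref{finalstep}, dualising every step: replace subsets by supersets, pivots by anti-pivots, $\D$ by $\U$, and invoke the second half of Observation \ref{obs} instead of the first. I would order $V' = \bigcup_{k=0}^m V'_k$ by ordinary inclusion ($x \leq y$ iff $x \subseteq y$), rather than by reverse inclusion. Since $X$ is $(\epsilon, \s_{i_k})$-fat and $V'_k = \s_{i_k} \cap X^{(k)}$, identifying $X$ with $[|X|]$ shows that $(V', \subseteq)$ is isomorphic to an element of $\U(|X|, m, \epsilon)$. It then suffices to verify that the map $\psi: V' \to W'$ defined by $x \mapsto w_x$ is an order-preserving bijection onto $(W', \subseteq)$ as a subposet of $\f$.

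Two facts would be used throughout. First, the sequence $(B_i)$ is increasing, so whenever $k_1 \leq k_2$, the witness $w_y \in \f_{i_{k_2}-1}$ satisfies $w_y \supseteq B_{i_{k_2}-1} \supseteq B_{i_{k_1}}$, which is exactly what is needed to invoke the second part of Observation \ref{obs} with $F = w_y$. Second, since $X \subseteq [n] \setminus B_{i_m}$ and $B_{i_k} \subseteq B_{i_m}$, every element of $V'$ is disjoint from every $B_{i_k}$, which makes all the intersection computations transparent.

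Take $x \in V'_{k_1}$ and $y \in V'_{k_2}$ with $k_1 \leq k_2$, and handle three cases just as in Claim \ref{finalstep}. If $k_1 = k_2$ and $x \neq y$, then $|w_x| = |w_y| = |B_{i_{k_1}}|$, while $x \subseteq w_x$ but $x \cap w_y = x \cap y \subsetneq x$, so $w_x \neq w_y$ and they are incomparable in $\f$. If $k_1 < k_2$ and $x \subsetneq y$, applying the second part of Observation \ref{obs} to the anti-pivot $x$ with $F = w_y$ gives $w_x \subseteq w_y$ (since $x \subseteq y \subseteq w_y$); strictness then follows because $y \setminus x \subseteq w_y$, while $w_x = (B_{i_{k_1}} \setminus x') \cup x$ is disjoint from $y \setminus x$ (using $y \cap B_{i_{k_1}} = \emptyset$ and $(y \setminus x) \cap x = \emptyset$). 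If $k_1 < k_2$ and $x, y$ are incomparable, the same observation gives $w_x \not\subseteq w_y$ because $x \cap w_y = x \cap y \subsetneq x$, and by direct verification $w_y \not\subseteq w_x$ because $y \subseteq w_y$ while $y \cap w_x = y \cap x \subsetneq y$.

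The main subtlety, exactly as in Claim \ref{finalstep}, is keeping track of which $B_{i_k}$ each witness must contain so that Observation \ref{obs} can legitimately be invoked; this reduces to the monotonicity of $(B_i)$ recorded at the start of the closing argument together with the disjointness $X \cap B_{i_m} = \emptyset$. Once these are in place the case analysis is a routine dualisation of the one in Claim \ref{finalstep}.
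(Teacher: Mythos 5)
Your proposal is correct and follows essentially the same route as the paper: order $V'$ by inclusion, use the fatness of $X$ to place $(V',\subseteq)$ in $\U(|X|,m,\epsilon)$, and verify that $x\mapsto w_x$ strongly preserves order via the anti-pivot half of Observation \ref{obs}, using $w_y\supseteq B_{i_{k_2}-1}\supseteq B_{i_{k_1}}$ and the disjointness of $X$ from the $B_{i_k}$. You in fact spell out a couple of steps the paper leaves terse (strictness of $w_x\subsetneq w_y$ and both directions of incomparability), but the argument is the same.
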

\begin{proof}
Order the elements of $V'$ so that $x\leq y$ if and only if $x\subseteq y$. Up to relabelling of the elements of $X$ it is clear that $V'\in \U(|X|, m, \epsilon)$, since $X$ is $(\epsilon,V'_k)$-fat for every $k$, $0\leq k\leq m$.

Let $\psi': V'\longrightarrow W'$ be the map sending $x\in V'$ to $w_x\in W'$. We shall prove that $\psi'$ is an isomorphism between $(V',\leq)$ and $(W',\subseteq)$, which proves the claim.

Suppose $x,y\in V'$ where $w_x\in \f_{i_k-1}$ for some $i_{k_1}$ and $w_y\in \f_{i_{k_2}}$ for some $i_{k_2}$ and without loss of generality $k_1\leq k_2$. If $k_1 = k_2$ and $x\neq y$ then $x$ and $y$ are incomparable in $V'$ and $w_x$ and $w_y$ are two distinct elements of $\f$ of the same size, hence are also incomparable in $\f$. So let us assume that $k_1<k_2$ and $x\neq y$. Clearly since $|x|<|y|$, either $x<y$ (if $x\subsetneq y$) or $x$ and $y$ are incomparable. If $x\subsetneq y$, then $w_x \subsetneq w_y$ by Observation \ref{obs} since $B_{i_{k_1}}\subseteq  w_y$. If $x$ and $y$ are incomparable then there exists $z\in x\backslash y$, but since $z\in X \subseteq [n] \backslash B_{i_{k_2}}$, $z\not\in w_y$, and so $w_x$ and $w_y$ must be incomparable. This shows that $\psi'$ preserves the order relation and finishes the proof of the claim.  
\end{proof}
By Claim \ref{finalstep2} $\f$ contains an induced poset isomorphic to an element of $\U(|X|,m,\epsilon)$, and since $|X|\geq 2m$ it contains an induced copy of $P$ by Lemma \ref{universality2} and Lemma \ref{universality}. This finishes the proof of Theorem \ref{maintheorem} in the case where $b_t  = m$. 
\qed
\section{Acknowledgement}
The author wishes to thank Andrew Thomason for his very hepful comments and advice. This research was funded by an EPSRC doctoral studentship.  


\begin{thebibliography}{20}
\bibitem{boehnleinjiang}
E. Boehnlein and T. Jiang, Set families with a forbidden induced subposet, \emph{Combinatorics, Probability and Computing} 21 (4) (2012), 496--511.
\bibitem{bukh}
B. Bukh, Set families with a forbidden subposet, \emph{The Electronic Journal of Combinatorics} 16 (2009), \#R142.
\bibitem{carrollkatona}
T. Carroll and G. O. H. Katona, Bounds on maximal families of sets not containing three sets with $A\cap B\subseteq C$, $A\not\subset B$, \emph{Order} 25, 229--236.
\bibitem{erdos}
P. Erd\H{o}s, On a lemma of Littlewood and Offord, \emph{Bull. Amer. Math. Soc.} 51, 898-902.
\bibitem{griggslilu}
J. R. Griggs, W.-T. Li and L. Lu, Diamond-free families, \emph{Journal of Combinatorial Theory Series A} 119 (2) (2012), 310--322.
\bibitem{janluru}
S. Janson, T. \L uczak and A. Ruci\'nski, \emph{Random graphs} (2000), New York: John Wiley \& Sons,  25--30.
\bibitem{kramaryoung}
L. Kramer, R. R. Martin and M. Young, On diamond-free subposets of the Boolean lattice, \emph{Journal of Combinatorial Theory Series A} 120 (3) (2013), 545--560.
\bibitem{lubell}
D. Lubell, A short proof of Sperner's lemma, \emph{Journal of Combinatorial Theory} 1 (2) (1966), 299.
\bibitem{lumilans} L. Lu, K. G. Milans, Set families with forbidden subposets, arXiv:1408.0646 (2014). 
\bibitem{marcustardos}
A. Marcus, G. Tardos, Excluded permutation matrices and the Stanley-Wilf conjecture, \emph{Journal of Combinatorial Theory Series A} 107 (1) (2004), 153--160.
\bibitem{methukupav}
A. Methuku, D. P\'alv\"{o}lgyi, Forbidden hypermatrices imply general bounds on induced forbidden subposet problems, arXiv:1408.4093 (2014). 
\bibitem{sperner}
E. Sperner, Ein Satz \"{u}ber Untermengen einer endlichen Menge, \emph{Math. Z.} 27 (1) (1928), 544--548.
\end{thebibliography}
\end{document}